\colorlet{darkishRed}{red!80!black}
\colorlet{darkishBlue}{blue!60!black}
\colorlet{darkishGreen}{green!60!black}
\newcommand{\SCnumberCite}[1]
{%
\ifthenelse{\equal{#1}{1}}{\cite{StarComb1StarsAndCombs}}{}%
\ifthenelse{\equal{#1}{2}}{\cite{StarComb2TheDominatedComb}}{}%
\ifthenelse{\equal{#1}{3}}{\cite{StarComb3TheUndominatedComb}}{}%
\ifthenelse{\equal{#1}{4}}{\cite{StarComb4TheUndominatingStar}}{}%
}
\newcommand{\SCnumberHand}[2]
{%
\ifthenelse{\equal{#1}{#2}}{\ding{43}\,}{}%
}
\newcommand{\SCintroList}[2]
{%
    \ifthenelse{\equal{#1}{#2}}{(this paper)}{\SCnumberCite{#1}}%
}
\newcommand{\SCintroDetermined}[1]
{%
    \ifthenelse{\equal{#1}{1}}{In this paper, we determine}{In the first paper of this series, we determined}%
}
\newcounter{quotecount}
\newcommand*{\addFileDependency}[1]{
  \typeout{(#1)}
  \@addtofilelist{#1}
  \IfFileExists{#1}{}{\typeout{No file #1.}}
}
\newcommand{\Abs}[1]{\partial_{\Omega} {#1}}
\newcommand{\rest}{\upharpoonright}
\renewcommand{\subset}{\subseteq}
\renewcommand{\supset}{\supseteq}
\def\TFAD{Let $G$ be any connected graph and let $U\subset V(G)$ be any vertex set. Then the following assertions are complementary:}
\newcommand{\at}{attached to }
\newcommand{ \N } { \mathbb{N} }
\newcommand{\dblue}[1]{\textcolor{darkishBlue}{#1}}
\newcommand{\dc}[1]{\lceil #1\rceil}
\newcommand{\uc}[1]{\lfloor #1\rfloor}
\newcommand{\guc}[1]{\lfloor\mkern-1.4\thinmuskip\lfloor #1\rfloor\mkern-1.4\thinmuskip\rfloor}
\newcommand{\nt}{T_{\textup{\textsc{nt}}}}
\newcommand{\Top}{\textsc{Top}}
\newcommand{\MTop}{\textsc{MTop}}
\newcommand{\VTop}{\textsc{VTop}}
\def\calCommandfactory#1{%
   \expandafter\def\csname c#1\endcsname{\mathcal{#1}}}
\def\frakCommandfactory#1{%
   \expandafter\def\csname frak#1\endcsname{\mathfrak{#1}}}
\newcounter{ctr}
  \edef\X{\@Alph\c@ctr}
  \edef\Y{\@alph\c@ctr}
\renewcommand{\cC}{\mathscr{C}}
\renewcommand{\cR}{\mathscr{R}}
\def\lowfwd #1#2#3{{\mathop{\kern0pt #1}\limits^{\kern#2pt\raise.#3ex
\vbox to 0pt{\hbox{$\scriptscriptstyle\rightarrow$}\vss}}}}
\def\lowbkwd #1#2#3{{\mathop{\kern0pt #1}\limits^{\kern#2pt\raise.#3ex
\vbox to 0pt{\hbox{$\scriptscriptstyle\leftarrow$}\vss}}}}
\def\fwd #1#2{{\lowfwd{#1}{#2}{15}}}
\def\Sinf{S_{\aleph_0}}
\def\vE{{\hskip-1pt{\fwd{E}{3.5}}\hskip-1pt}}
\def\vF{{\hskip-1pt{\fwd{F}{3.5}}\hskip-1pt}}
\def\ve{\kern-1.5pt\lowfwd e{1.5}2\kern-1pt}
\def\ev{\kern-1pt\lowbkwd e{0.5}2\kern-1pt}
\def\vf{\kern-2pt\lowfwd f{2.5}2\kern-1pt}
\newtheorem{theorem}{Theorem}[section] 
\newtheorem{corollary}[theorem]{Corollary}
\newtheorem{lemma}[theorem]{Lemma}
\newtheorem{mainresult}{Theorem}
\newenvironment{customthm}[1]
  {\innercustomthm}
  {\endinnercustomthm}
\theoremstyle{definition}
\newtheorem{example}[theorem]{Example}
\theoremstyle{remark}
\begin{document}

\title[Duality theorems for stars and combs II: Dominated combs]{Duality theorems for stars and combs\\
II: Dominating stars and dominated combs}

\author{Carl Bürger}
\author{Jan Kurkofka}
\address{University of Hamburg, Department of Mathematics, Bundesstraße 55 (Geomatikum), 20146 Hamburg, Germany}
\email{carl.buerger@uni-hamburg.de, jan.kurkofka@uni-hamburg.de}

\keywords{infinite graph; star-comb lemma; dominating star; dominated comb; duality; dual; complementary; normal tree; tree-decomposition}

\@namedef{subjclassname@2020}{\textup{2020} Mathematics Subject Classification}
\subjclass[2020]{05C63, 05C40, 05C75, 05C05} 

\begin{abstract}
In a series of four papers we determine structures whose existence is dual, in the sense of complementary, to the existence of stars or combs.
Here, in the second paper of the series, we present duality theorems for combinations of stars and combs: dominating stars and dominated combs.
As dominating stars exist if and only if dominated combs do, the structures complementary to them coincide.
Like for arbitrary stars and combs, our duality theorems for dominated combs (and dominating stars) are phrased in terms of normal trees or tree-decompositions.

The complementary structures we provide for dominated combs unify those for stars and combs and allow us to derive our duality theorems for stars and combs from those for dominated combs.
This is surprising given that our complementary structures for stars and combs are quite different: those for stars are locally finite whereas those for combs are~rayless.
\end{abstract}
\vspace*{-1.14cm} 
\maketitle

\vspace*{-.75cm}

\section{Introduction}
\noindent Two properties of infinite graphs are \emph{complementary} in a class of infinite graphs if they partition the class.
In a series of four papers we determine structures whose existence is complementary to the existence of two substructures that are particularly fundamental to the study of connectedness in infinite graphs: stars and combs.
See~\cite{StarComb1StarsAndCombs} for a comprehensive introduction, and a brief overview of results, for the entire series of four papers (\cite{StarComb1StarsAndCombs,StarComb3TheUndominatedComb,StarComb4TheUndominatingStar} and this paper).

In the first paper~\cite{StarComb1StarsAndCombs} of this series we found structures whose existence is complementary to the existence of a star or a comb \at a given set $U$ of vertices.
A~\emph{comb} is the union of a ray $R$ (the comb's \emph{spine}) with infinitely many disjoint finite paths, possibly trivial, that have precisely their first vertex on~$R$. 
The last vertices of those paths are the \emph{teeth} of this comb.
Given a vertex set $U$, a \emph{comb attached to} $U$ is a comb with all its teeth in $U$, and a \emph{star attached to} $U$ is a subdivided infinite star with all its leaves in $U$.
Then the set of teeth is the \emph{attachment set} of the comb, and the set of leaves is the \emph{attachment set} of the star.

As stars and combs can interact with each other, this is not the end of the story.
For example, a given vertex set $U$ might be connected in a graph $G$ by both a star and a comb, even with infinitely intersecting sets of leaves and teeth. 
To formalise this, let us say that a subdivided star $S$ \emph{dominates} a comb $C$ if infinitely many of the leaves of $S$ are also teeth of $C$.
A \emph{dominating star} in a graph~$G$ then is a subdivided star $S\subset G$ that dominates some comb $C\subset G$; and a \emph{dominated comb} in $G$ is a comb $C\subset G$ that is dominated by some subdivided star $S\subset G$.
In this second paper of our series we determine structures whose existence is complementary to the existence of  dominating stars and dominated combs.
Note that duality theorems for dominated combs are by nature also duality theorems for dominating stars, because for a graph $G$ and a vertex set $U\subset V(G)$ the existence of a dominated comb \at $U$ is equivalent to the existence of a dominating star \at ~$U$. 
For the sake of readability, we will state our duality theorems only for dominated combs.

Our first duality theorem for dominated combs is phrased in terms of normal trees.
A rooted tree $T\subset G$ is \emph{normal} in $G$ if the endvertices of every $T$-path in $G$ are comparable in the tree-order of $T$.
A vertex $v$ of $G$ \emph{dominates} a ray $R\subset G$ if there is an infinite $v$--$(R-v)$ fan in~$G$. 
For example, a comb is dominated in $G$ if and only if its spine is dominated in $G$.
Rays not dominated by any vertex are \emph{undominated}.
An end of $G$ is \emph{dominated} and \emph{undominated} if one (equivalently:~each) of its rays is dominated and undominated, respectively.
(See Diestel's textbook~\cite{DiestelBook5}.)

\begin{customthm}{\ref{domCombNTduality}}
\TFAD
\begin{enumerate}
    \item $G$ contains a \dblue{dominated comb} \at $U$;
    \item there is a \dblue{normal tree} $T\subset G$ that contains $U$ and all whose rays are undominated in $G$.
\end{enumerate}
Moreover, the normal tree $T$ in \emph{(ii)} can be chosen such that it contains $U$ cofinally and every component of $G-T$ has finite neighbourhood.
\end{customthm}

When a graph contains no star or no comb \at $U$, then in particular it contains no dominated comb \at $U$.
Hence, by our theorem, the graph contains a certain normal tree.
If there is no star, then this normal tree will be locally finite;
and if there is no comb, then it will be rayless.
Therefore, our duality theorem for dominated combs in terms of normal trees implies our duality theorems for arbitrary stars and combs in terms of normal trees from~\cite{StarComb1StarsAndCombs}, Theorems~\ref{thm: comb NT duality} and~\ref{thm: star NT duality}.
This is surprising given that infinite trees cannot be locally finite and rayless at the same time.

As an application, we will generalise Diestel's structural characterisation~\cite{VTopComp} of the graphs for which the topological spaces obtained by adding their ends are metrisable.
Depending on the topology chosen, Diestel characterised these graphs in terms of normal spanning trees, dominated combs, and infinite stars.
Applying Theorem~\ref{domCombNTduality}, 
we can now provide, for any given set $U$ of vertices, characterisations and simple existence criteria for connected metrisable (standard) subspaces containing $U$ in the various topologies.  
Our results will be in terms of normal trees containing $U$, dominated combs \at ~$U$, and stars \at ~$U$.

Theorem~\ref{domCombNTduality} is significantly strengthened by its `moreover' part.
It will be needed in the proof of our second 
duality theorem for dominated combs which is phrased in terms of tree-decompositions.
For the definition of tree-decompositions see~\cite{DiestelBook5}.
`Essentially disjoint' and `displaying' are defined in Section~\ref{section:domCombTDC}.
An end $\omega$ of a graph $G$ is contained \emph{in the closure} of a vertex set $U\subset V(G)$ in $G$ if $G$ contains a comb \at $U$ whose spine lies in~$\omega$.

\begin{customthm}{\ref{thm: domcomb tdc dual III}}
\TFAD
\begin{enumerate}
    \item $G$ contains a \dblue{dominated comb} \at $U$;
    \item $G$ has a rooted \dblue{tree-decomposition} $(T,\cV)$ such that:
    \begin{itemize}
        \item[\textbf{--}] each part contains at most finitely many vertices from $U$;
        \item[\textbf{--}]all parts at non-leaves of $T$ are finite;
        \item[\textbf{--}]$(T,\cV)$ has essentially disjoint connected adhesion sets;
        \item[\textbf{--}]$(T,\cV)$ displays the ends of $G$ in the closure of~$U$ in $G$.
    \end{itemize}
    \end{enumerate}
\end{customthm}

Similar to Theorem~\ref{domCombNTduality}, our duality theorem for dominated combs in terms of tree-decompositions implies our duality theorems for arbitrary stars and combs in terms of tree-decompositions from~\cite{StarComb1StarsAndCombs}, Theorems~\ref{thm: comb tree-decomposition duality} and~\ref{thm: star tree-decomposition duality}.

In our proof of Theorem~\ref{thm: domcomb tdc dual III} we employ a profound theorem of Carmesin~\cite{carmesin2014all}, which states that every graph has a tree-decomposition displaying all its undominated ends.
As it will be the case in this paper, Carmesin's theorem might often be used for graphs with normal spanning trees.
For this particular case we provide a substantially shorter~proof.

This paper is organised as follows.
Section~\ref{section:domCombNT} establishes our duality theorem for dominated combs in terms of normal trees.
In Section~\ref{section:domCombTDC} we prove our duality theorems for dominated combs in terms of tree-decompositions. Our short proof of Carmesin's theorem for graphs with a normal spanning tree can be found there as well.

Throughout this paper, $G=(V,E)$ is an arbitrary infinite graph.
We use the graph theoretic notation of Diestel's book~\cite{DiestelBook5}, and we assume familiarity with the tools and terminology described in the first paper of this series~\cite[Section~2]{StarComb1StarsAndCombs}.


\section{Dominated combs and normal trees}\label{section:domCombNT}

\noindent In this section we obtain the following duality theorem for dominated combs in terms of normal trees:

\begin{mainresult}\label{domCombNTduality}
\TFAD
\begin{enumerate}
    \item $G$ contains a dominated comb \at $U$;
    \item there is a normal tree $T\subset G$ that contains $U$ and all whose rays are undominated in $G$.
\end{enumerate}
Moreover, the normal tree $T$ in \emph{(ii)} can be chosen such that it contains $U$ cofinally and every component of $G-T$ has finite neighbourhood. 
\end{mainresult}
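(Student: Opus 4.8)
The plan is to prove the two implications (ii)$\to$(i) and $\neg$(i)$\to$(ii), the latter in the strong form including the `moreover' part; together these give complementarity. The direction (ii)$\to$(i) should be short. Suppose $T\subset G$ is a normal tree containing $U$ all of whose rays are undominated, and suppose for contradiction that $G$ also contains a dominated comb $C$ \at $U$. Then the spine of $C$ lies in some end $\omega$ of $G$ that is dominated. Since $T$ contains $U$ and $U$ meets the teeth of $C$ infinitely, a standard fact about normal trees (normal trees capture exactly the ends in the closure of their vertex set, and a ray attached to $V(T)$ forces a ray of $T$ in the same end) yields a ray $R'\subset T$ with $R'\in\omega$. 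As $\omega$ is dominated, $R'$ is dominated in $G$, contradicting the choice of $T$. One must be slightly careful that ``dominated'' transfers correctly between rays of a common end; this is exactly the remark recorded in the introduction that an end is dominated iff one (equivalently each) of its rays is, so no real difficulty arises.

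The substance of the theorem is the contrapositive direction. Assume $G$ contains no dominated comb \at $U$; equivalently, no dominating star \at $U$. I would first build a normal tree $T_0\subset G$ containing $U$, using the standard existence theorem for normal trees through a prescribed vertex set in a connected graph (as developed in \cite{StarComb1StarsAndCombs}); one can moreover arrange that $T_0$ contains $U$ cofinally and that every component of $G-T_0$ has finite — indeed connected — neighbourhood, again by the refined normal-tree constructions available from the first paper. The task is then to show that, after possibly pruning, $T_0$ has no dominated rays. The key observation is: if a ray $R\subset T_0$ were dominated in $G$, say by a vertex $v$, then because $T_0$ is normal the down-closure $\lceil R\rceil$ together with an infinite $v$--$(R-v)$ fan and the $v$--$\lceil R\rceil$ path in $T_0$ assembles a dominating star whose leaves can be pushed down into $U$ using that $T_0$ contains $U$ cofinally and that separations in normal trees are along down-closures of vertices — contradicting our assumption, unless the fan and the comb it spans fail to have teeth in $U$. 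This is where the hypothesis must be used with care: a dominated ray of $T_0$ need not by itself certify a dominated comb \emph{attached to $U$}. So the real argument is to choose $T_0$ so that it ``sees'' $U$ tightly enough — cofinally, with finite component-neighbourhoods — that any dominated ray of $T_0$ \emph{does} yield a dominated comb \at $U$, and hence no such ray exists.

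Concretely, I expect the proof to proceed by iteratively refining the normal tree: start from $T_0$ as above, and whenever a ray $R$ of the current tree is dominated, the finite-neighbourhood/cofinality properties let us reroute or truncate the tree below the top of $R$, absorbing the relevant part into a rayless or differently-structured piece, while maintaining normality, containment of $U$, cofinality in $U$, and finiteness of component-neighbourhoods — and crucially reducing some well-founded parameter (e.g. along the tree-order, or via a rank on the set of dominated ends still ``visible''). A cleaner route, and the one I would try first, is to invoke the classification of ends: the ends of $G$ in the closure of $U$ that are dominated would each give a dominated comb \at $U$, so under $\neg$(i) \emph{every} end in the closure of $U$ is undominated; then one constructs the normal tree $T$ containing $U$ cofinally so that its rays lie only in ends in the closure of $U$ (which normal trees containing $U$ cofinally do, up to the usual correspondence between rays of $T$ and ends of $G$), whence all rays of $T$ are undominated automatically. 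The main obstacle is precisely this correspondence step — guaranteeing that a normal tree can be built which both contains $U$ cofinally \emph{and} has each of its rays converging to an end in the closure of $U$, together with the finite-neighbourhood condition on $G-T$ — and verifying that ``$\neg$(i)'' really does force undominatedness of \emph{every} end in the closure of $U$, not merely of those realised by some particular comb. I would handle the latter by the equivalence ``end in closure of $U$'' $\Leftrightarrow$ ``there is a comb \at $U$ with spine in that end'' (stated in the introduction) combined with ``dominated end'' $\Leftrightarrow$ ``that comb is dominated'', so that a dominated end in the closure of $U$ immediately contradicts $\neg$(i).
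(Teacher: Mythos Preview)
Your proposal correctly identifies the overall architecture, and your ``cleaner route'' is indeed the one the paper takes: under $\neg$(i) every end in the closure of $U$ is undominated; build a normal tree containing $U$ cofinally; its rays lie only in ends in the closure of $U$ and are therefore undominated. The direction (ii)$\Rightarrow\neg$(i) you sketch is also essentially the paper's argument.

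However, two genuine gaps remain, and the second is the heart of the theorem.

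First, you assume a normal tree through $U$ exists ``by the standard existence theorem'', but Jung's theorem requires $U$ to be a countable union of dispersed sets, and this must be checked. The paper does so by intersecting $U$ with the distance classes $D_n$ from a fixed vertex and observing that any comb attached to $D_n\cap U$ would be dominated (via a star in $G[D_0\cup\cdots\cup D_n]$), contradicting $\neg$(i); hence each $D_n\cap U$ is dispersed.

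Second, and more seriously, you assert that the condition ``every component of $G-T$ has finite neighbourhood'' can be arranged ``by the refined normal-tree constructions available from the first paper''. No such construction is available there; indeed, the `moreover' part of the star duality theorem (Theorem~\ref{thm: star NT duality}) is explicitly \emph{deferred} to this paper and deduced from the present result. This is the key new idea, and your proposal does not supply it. The paper's trick is to enlarge $U$ to $\hat U:=U\cup\bigcup\cR$, where $\cR$ is a \emph{maximal} collection of pairwise disjoint rays from ends in the closure of $U$, and to build the normal tree $\hat T$ through $\hat U$ instead. If some component $C$ of $G-\hat T$ had infinite neighbourhood, that neighbourhood would be a chain determining an undominated normal ray $R\subset\hat T$; undominatedness then forces a comb in $C$ whose spine $R'$ is equivalent to $R$ and disjoint from $\hat U$, contradicting the maximality of $\cR$. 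Only afterwards does one pass to the down-closure of $U$ in $\hat T$ to obtain $T$. Your ``iterative refinement'' alternative does not suggest any mechanism for achieving this, and it is unclear what well-founded parameter would decrease.
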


\noindent The inconspicuous `moreover' part will pave the way for our duality theorem for dominated combs in terms of tree-decompositions (Theorem~\ref{thm: domcomb tdc dual III}).

Before we provide a proof of Theorem~\ref{domCombNTduality} above, we shall discuss some consequences and applications. 
As a first consequence, Theorem~\ref{domCombNTduality} above builds a bridge between the duality theorems for combs (Theorem~\ref{thm: comb NT duality})
and stars (Theorem~\ref{thm: star NT duality})
in terms of normal trees, which we recall here. 
\begin{theorem}[{\cite[Theorem~1]{StarComb1StarsAndCombs}}]\label{thm: comb NT duality}
\TFAD
\begin{enumerate}
\item $G$ contains a comb \at $U$;
\item there is a rayless normal tree $T\subset G$ that contains $U$.
\end{enumerate}
Moreover, the normal tree $T$ in \emph{(ii)} can be chosen so that it contains $U$ cofinally.
\end{theorem}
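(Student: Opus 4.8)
My plan is to prove the complementarity as two implications: that (i) and (ii) exclude each other, and that the failure of (i) forces (ii). Throughout I would lean on the standard separation property of normal trees \cite{DiestelBook5}: writing $\lceil t\rceil$ for the (necessarily finite) set of vertices on the path in $T$ from the root to $t$, any two vertices $x,y\in T$ are separated in $G$ by $\lceil x\rceil\cap\lceil y\rceil=\lceil x\wedge y\rceil$, where $x\wedge y$ denotes their meet in the tree-order. In particular, if $x$ and $y$ are incomparable then every $x$--$y$ path in $G$ meets $\lceil x\wedge y\rceil$.

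\textbf{Mutual exclusivity.} Suppose for contradiction that $T$ is a rayless normal tree with $U\subseteq V(T)$ while $G$ contains a comb attached to $U$, with spine $R$ and teeth $W=\{w_1,w_2,\dots\}\subseteq V(T)$. I would build an infinite increasing chain $m_0<m_1<\cdots$ in $T$, contradicting raylessness. Starting from the root $m_0$ with $W_0:=W$, suppose I have an infinite set $W_n\subseteq W$ of teeth of a sub-comb, all lying above $m_n$ in the tree-order. Fixing one tooth $w^\ast\in W_n$, every other tooth meets $w^\ast$ inside the finite set $\lceil w^\ast\rceil$, so by the pigeonhole principle infinitely many teeth share a common meet $\mu\ge m_n$ with $w^\ast$. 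If $\mu>m_n$, set $m_{n+1}:=\mu$. If $\mu=m_n$, pass to the children of $m_n$: either one child lies below infinitely many of these teeth, which I take as $m_{n+1}$, or infinitely many distinct children are used. This last case cannot occur: its teeth would pairwise meet at $m_n$, yet all but finitely many of the corresponding (pairwise disjoint) tooth-paths avoid the finite set $\lceil m_n\rceil$ and attach to a tail of the spine that also avoids $\lceil m_n\rceil$, so two such teeth in different children would be joined in $G$ by a path avoiding $\lceil m_n\rceil$, contradicting that $\lceil m_n\rceil$ separates them. Thus $m_{n+1}>m_n$ always exists, and the resulting chain is a ray of $T$, a contradiction.

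\textbf{Existence.} For the converse I would first record a convenient reformulation of raylessness: if $T$ is any normal tree containing $U$ cofinally and $G$ has no comb attached to $U$, then $T$ is automatically rayless. Indeed, an infinite branch $t_0<t_1<\cdots$ together with vertices $u_n\in U$ satisfying $u_n\ge t_n$ (available by cofinality, and taking infinitely many distinct values since each $\lceil u_n\rceil$ is finite) gives rise, inside $T\subseteq G$, to a comb with spine along the branch and teeth in $U$. Hence it only remains to build, from the assumption that $G$ contains no comb attached to $U$, some normal tree $T$ containing $U$ cofinally. I would construct $T$ by transfinite recursion, repeatedly extending the current normal tree into those components of its complement that still contain vertices of $U$; this is possible because the neighbourhood in $T$ of each such component is a chain, along whose top a new branch reaching a vertex of $U$ can be hung while preserving normality. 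Adding only vertices needed to reach $U$ keeps $U$ cofinal, and raylessness then comes for free by the reformulation above.

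\textbf{Main obstacle.} The delicate point, and where I expect the real work to lie, is to show that this recursion terminates with all of $U$ covered. Getting stuck would mean that some vertex of $U$ is reachable from the tree only across infinitely many nested finite separators, and unravelling this configuration produces precisely a comb attached to $U$, which the hypothesis forbids. Equivalently, the failure of (i) says that $U$ is \emph{dispersed} (every end of $G$ is separable from $U$ by finitely many vertices), and dispersedness is exactly what drives the recursion to exhaust $U$; the separation argument used for mutual exclusivity is comparatively self-contained, whereas this existence step requires care, in particular in handling possibly uncountable $U$ and the bookkeeping of the transfinite construction.
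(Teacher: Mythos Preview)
Your proposal is essentially correct, but it takes a genuinely different route from the one this paper uses.

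In this paper Theorem~\ref{thm: comb NT duality} is not proved directly: it is derived as a corollary of Theorem~\ref{domCombNTduality} (the dominated-comb duality). For $\neg$(i)$\to$(ii), the absence of a comb attached to~$U$ implies in particular the absence of a \emph{dominated} comb, so Theorem~\ref{domCombNTduality} yields a normal tree containing $U$ cofinally whose rays are all undominated; since there is no comb at all, this tree must in fact be rayless. The construction behind Theorem~\ref{domCombNTduality} does not build the normal tree by hand but invokes Jung's theorem (Theorem~\ref{thm: NT and dispersed sets}) after observing, via Lemma~\ref{distanceClassesDispersed}, that the intersections of $U$ with the distance classes are dispersed. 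For mutual exclusivity the paper simply appeals to Lemma~\ref{lemma: nts reflect ends in closure}: the end of the comb's spine lies in $\Abs{U}\subset\Abs{T}$, so $T$ contains a normal ray, contradicting raylessness.

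Your approach, by contrast, is self-contained. Your mutual-exclusivity argument is a correct direct combinatorial extraction of an infinite chain from the teeth, avoiding the end-reflection lemma at the cost of some case analysis. For existence you run a transfinite greedy extension towards vertices of~$U$, using the key observation that if a component of $G-T$ had an infinite (ray) neighbourhood, cofinality of $U$ in the tree built so far would yield a comb attached to~$U$; this is effectively a bare-hands proof of the one-set case of Jung's theorem. What the paper's route buys is unification: the same Theorem~\ref{domCombNTduality} simultaneously yields the star duality (Theorem~\ref{thm: star NT duality}) and its `moreover' part. What your route buys is elementarity: no black-box appeal to Jung, and no detour through dominated combs. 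Your identification of the termination step as the crux is accurate; just be aware that making it precise amounts to checking that unions at limit stages remain normal and still contain the covered part of $U$ cofinally.
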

\begin{theorem}[{\cite[Theorem~6]{StarComb1StarsAndCombs}}]\label{thm: star NT duality}
\TFAD
\begin{enumerate}
    \item $G$ contains a star \at $U$;
    \item there is a locally finite normal tree $T\subset G$ that contains $U$ and all whose rays are undominated in $G$.
\end{enumerate}
Moreover, the normal tree $T$ in \emph{(ii)} can be chosen such that it contains $U$ cofinally and every component of $G-T$ has finite neighbourhood.
\end{theorem}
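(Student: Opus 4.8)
The plan is to prove the two halves of the complementarity separately: that (ii) rules out (i), and that the failure of (i) yields (ii) together with its `moreover' part. Throughout I use the standard fan criterion for domination: a vertex $v$ dominates a ray $R$ if and only if for every finite set $S$ of vertices with $v\notin S$ there is a $v$--$R$ path in $G-S$ (one direction is immediate; the other builds an infinite fan greedily, each time deleting the finitely many previously used vertices). Applied with a vertex set in place of $R$, the same criterion shows that $G$ contains a star \at $U$ if and only if some vertex admits an infinite fan to $U$, i.e.\ cannot be separated from $U$ by finitely many vertices.

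For the implication (ii)$\Rightarrow\neg$(i) I would argue by contradiction. Suppose $T$ is as in (ii) and that $G$ nonetheless contains a star \at $U$, with centre $c$ and leaves $\ell_1,\ell_2,\dots\in U\subseteq V(T)$. Since $T$ is locally finite and rooted, each of its levels is finite, so the subtree $T'$ spanned by the root together with all $\ell_i$ is infinite and locally finite; by König's lemma it contains a ray $R$, which can be chosen so that every vertex of $R$ has infinitely many $\ell_i$ above it in the tree order. I claim $c$ dominates $R$, contradicting (ii). To verify the fan criterion, let $S$ be finite with $c\notin S$. All but finitely many star paths avoid $S$ (they meet only in $c$), and since $S\cap V(T')$ is finite it is bounded in height; choosing a vertex $w$ of $R$ of height exceeding that bound together with some $\ell_i$ above $w$ whose star path avoids $S$, the star path followed by the descending tree-path from $\ell_i$ down to $w$ is a $c$--$R$ path avoiding $S$. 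Hence $c$ dominates the ray $R\subseteq T$, the desired contradiction.

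For the converse $\neg$(i)$\Rightarrow$(ii) I would first dispatch the undominated-rays condition, as it is essentially free: \emph{whenever} $T$ is a normal tree containing $U$ cofinally and $G$ has no star \at $U$, all rays of $T$ are undominated. Indeed, if a ray $R=v_0v_1\cdots$ of $T$ were dominated by some $v$, then cofinality of $U$ provides vertices $u_n\in U$ with $u_n\ge v_n$; either infinitely many $u_n$ lie on $R$, or infinitely many branch off $R$ at distinct, cofinal vertices. In the first case the fan from $v$ to $R$ can be rerouted along $R$ to reach these $u_n$; in the second it can be prolonged up the pairwise disjoint branches to the $u_n$. Either way the fan criterion yields an infinite $v$--$U$ fan, i.e.\ a star \at $U$, contrary to hypothesis. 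Thus it suffices to construct a \emph{locally finite} normal tree $T\subseteq G$ that contains $U$ cofinally and in which every component of $G-T$ has finite neighbourhood.

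This construction is the main obstacle. I would build $T$ by a (possibly transfinite) recursion, maintaining a normal tree $T'$ and the family of components of $G-T'$, and keeping the invariant that every component still to be entered has finite neighbourhood $N(C)$---a chain, hence with a top vertex $t=\max N(C)$. At a successor step I pick such a $C$ and invoke the hypothesis in the form: since there is no infinite fan from $t$ to $U\cap C$, some finite set separates $t$ from $U\cap C$ in $C$. Appending this set, suitably arranged, as finitely many new branches above $t$ both advances the capture of $U\cap C$ and splits $C$ into subcomponents of again-finite neighbourhood, against a well-founded progress measure. The two points where I expect the real difficulty are the two `moreover'-type demands. First, \emph{local finiteness}: I must append only finitely many children at each vertex while still reaching all of $U$, and this is exactly what the absence of an infinite fan to $U$ buys---any vertex forced to branch infinitely often towards $U$ would exhibit a star \at $U$. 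Second, \emph{finite neighbourhoods of the surviving components}: a $U$-free part of $G$ might attach to infinitely many vertices of $T$, so the recursion must be scheduled to enter such a part before its neighbourhood grows into an entire ray, keeping a top vertex available throughout. Reconciling these two demands while arranging each appended finite set as a genuine normal extension and driving $U$ into $T$ cofinally is the technical heart of the proof; once $T$ is built, the observation of the previous paragraph supplies the undominated-rays clause and completes (ii).
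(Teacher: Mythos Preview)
Your argument for (ii)$\Rightarrow\neg$(i) is correct and more detailed than what the paper offers (the paper merely remarks that the two assertions ``exclude each other almost immediately''). Your observation that, once $T$ contains $U$ cofinally, the undominated-rays clause comes for free under $\neg$(i) is also sound.

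The genuine gap is in your construction of $T$ for $\neg$(i)$\Rightarrow$(ii). You describe a recursion that, at each component $C$ with top neighbour $t$, appends a finite $t$--$(U\cap C)$ separator ``suitably arranged'' above $t$. Two of the difficulties you flag are not actually resolved. First, appending an arbitrary finite separator as branches above $t$ need not preserve normality; one has to thread the set as a path or a small tree in a way compatible with the existing chains, and you give no mechanism for this. Second, and more seriously, the `moreover' clause on finite neighbourhoods is a \emph{limit} statement: even if every component at every finite stage has finite neighbourhood, a component of $G-T$ surviving to the limit can accumulate an infinite neighbourhood along a normal ray of $T$. You say the recursion ``must be scheduled to enter such a part before its neighbourhood grows into an entire ray'', but provide no scheduling rule, progress measure, or argument that such a schedule exists. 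This is not a detail---it is the missing idea.

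The paper takes a completely different route and thereby sidesteps your construction entirely. It first proves the stronger Theorem~\ref{domCombNTduality} (for dominated combs rather than stars): assuming no dominated comb \at $U$, it augments $U$ to $\hat U$ by adding a maximal family of pairwise disjoint rays from ends in $\partial_\Omega U$, shows via distance classes that $\hat U$ is a countable union of dispersed sets, and then invokes Jung's theorem to obtain a normal tree $\hat T\supseteq\hat U$ cofinally. The $\hat U$-augmentation is precisely what forces every component of $G-\hat T$ to have finite neighbourhood (an infinite one would yield a new ray contradicting maximality). Theorem~\ref{thm: star NT duality} with its `moreover' part then follows in one line: no star \at $U$ implies no dominated comb \at $U$, so Theorem~\ref{domCombNTduality} delivers $T$, and $T$ is automatically locally finite because an infinite-degree vertex of $T$ would centre a star \at $U$. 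What your approach would buy, if completed, is a self-contained proof avoiding Jung's theorem; what the paper's approach buys is that the hard work (Jung plus the ray-augmentation trick) is done once, and both the star and the comb duality theorems fall out as corollaries.
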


Our duality theorem for dominated combs in terms of normal trees implies the corresponding duality theorems for combs and stars above.
This becomes apparent by a close look at Figure~\ref{fig: rel duality thms in terms of NTs}. 
The three columns of the diagram summarise the three duality theorems. 
Arrows depict implications between the statements; the dashed arrows indicate that further assumptions are needed to obtain their implications. 
On the left hand side, the extra assumption is that there is no comb \at ~$U$; on the right hand side, the extra assumption is that there is no star \at ~$U$.

\begin{figure}[ht]
\begin{tikzcd}[column sep=0cm] 
& {
{\begin{tabular}{l}
$\nexists$ dominated comb \\ \phantom{$\nexists$ }\at~$U$\end{tabular}}
}\arrow[d,leftrightarrow]\\ 
{\begin{tabular}{l}
$\nexists$ comb \at~$U$\end{tabular}}\arrow[d,leftrightarrow]\arrow[ur] & {\begin{tabular}{l}
    $\exists$ normal tree with \\
    \phantom{$\exists$} all rays undom- \\ \phantom{$\exists$} inated and ($\ast$)
\end{tabular}}\arrow[dl,rightarrow,dashed]\arrow[dr,rightarrow,dashed] & {\begin{tabular}{l}
$\nexists$ star \at~$U$\end{tabular}}\arrow[ul,rightarrow]\arrow[d,leftrightarrow]\\
{\begin{tabular}{l}
    $\exists$ rayless normal tree\\\phantom{$\exists$} with ($\ast$)
\end{tabular}}& & {\begin{tabular}{l}
    $\exists$ locally finite normal\\
    \phantom{$\exists$} tree with all rays un-\\ \phantom{$\exists$} dominated and ($\ast$)
    \end{tabular}}
\end{tikzcd}
    \caption{The relations between the duality theorems for combs, stars and dominated combs in terms of normal trees.\newline
    Condition ($\ast$) says that the normal tree contains $U$ cofinally and every component of the graph minus the normal tree has finite neighbourhood.}
    \label{fig: rel duality thms in terms of NTs}
\end{figure}

\noindent As a consequence of the two dashed arrows, we obtain the implications $\neg$(i)$\to$(ii) of Theorem~\ref{thm: comb NT duality} and of Theorem~\ref{thm: star NT duality} from the corresponding implication of Theorem~\ref{domCombNTduality}.
Indeed, if $G$ does not contain a comb \at $U$, then in particular it does not contain a dominated comb \at $U$. 
Hence Theorem~\ref{domCombNTduality} yields a normal tree, which additionally must be rayless. 
Similarly, if $G$ does not contain a star \at $U$, then in particular it does not contain a dominated comb \at ~$U$.
Hence Theorem~\ref{domCombNTduality} yields a normal tree, which additionally must be locally finite and satisfy that all its rays are undominated. 
Since (i) and (ii) of Theorem~\ref{thm: comb NT duality} and of Theorem~\ref{thm: star NT duality} exclude each other almost immediately we have, so far, derived these two duality theorems for combs and stars from our duality theorem for dominated combs---except for the `moreover' part of Theorem~\ref{thm: star NT duality}.

We proved Theorem~\ref{thm: star NT duality} without its `moreover' part in the first paper~\cite{StarComb1StarsAndCombs} of our series.
There, instead of proving the `moreover' part as well, we announced that we would prove it in this second paper of the series.
And here we prove it, by deriving it from the identical `moreover' part of Theorem~\ref{domCombNTduality}:

\begin{proof}[{Proof of Theorem~\ref{thm: star NT duality}, including its `moreover' part}]
Employ Theorem~\ref{domCombNTduality} as above.
\end{proof}

Another consequence of Theorem~\ref{domCombNTduality} is a fact whose previous proof, \cite[Lemma~2.3]{VTopComp}, relied on the theorem of Halin~\cite{halin78} which states that every connected graph without a subdivided $K^{\aleph_0}$ has a normal spanning tree:

\begin{corollary}
If $G$ is a connected graph none of whose ends is dominated, then $G$ is normally spanned.\qed
\end{corollary}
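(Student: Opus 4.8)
The plan is to deduce this directly from Theorem~\ref{domCombNTduality}, applied with $U := V(G)$. Since $G$ is connected, the hypothesis of that theorem is met, so it suffices to rule out its assertion~(i) for this choice of $U$; then assertion~(ii) will provide a normal tree $T\subseteq G$ with $V(G)\subseteq V(T)$, that is, a normal spanning tree of~$G$, which is precisely what \emph{normally spanned} means.

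To rule out~(i), I would argue that $G$ contains no dominated comb at all. Every comb attached to $V(G)$ is simply a comb in~$G$, and, by the facts recalled in the introduction, a comb is dominated in~$G$ if and only if its spine is dominated in~$G$. But the spine of a comb is a ray, and this ray belongs to some end $\omega$ of~$G$; by hypothesis $\omega$ is undominated, so each of its rays is undominated, in particular the spine. Hence no comb in~$G$ is dominated, and so assertion~(i) of Theorem~\ref{domCombNTduality} fails for $U=V(G)$.

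Consequently assertion~(ii) holds and hands us a normal spanning tree of~$G$; the further conclusions of Theorem~\ref{domCombNTduality} (all rays of $T$ undominated, $T$ containing $U$ cofinally, every component of $G-T$ with finite neighbourhood) are automatic or vacuous here, since $T$ is spanning and $G$ has no dominated ray. The only point requiring care is the purely definitional equivalence between ``$G$ has no dominated end'' and ``$G$ has no dominated comb'', which is the chain ``dominated comb $\Leftrightarrow$ dominated spine $\Leftrightarrow$ dominated end''; apart from this there is no real obstacle, and the corollary follows at once.
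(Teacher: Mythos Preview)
Your proposal is correct and follows exactly the approach the paper intends: the corollary is stated with an immediate \qed as a direct consequence of Theorem~\ref{domCombNTduality} applied with $U=V(G)$, and your argument supplies precisely the routine verification that assertion~(i) fails under the hypothesis that no end of $G$ is dominated.
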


For the proof of Theorem~\ref{domCombNTduality}, we shall need the following four lemmas and a result by Jung (cf.~\cite[Satz~6]{jung69} or~\cite[Theorem~3.5]{StarComb1StarsAndCombs}). 
The first lemma is from the first paper of this series and we remark that the original statement also takes critical vertex sets in the closure of $T$ or $W$ into account.

\begin{lemma}[{see \cite[Lemma~2.13]{StarComb1StarsAndCombs}}]\label{lemma: closure and tree containing U cofinally}
Let $G$ be any graph.
If $T\subset G$ is a rooted tree that contains a vertex set $W$ cofinally, then $\Abs{T}=\Abs{W}$.
\end{lemma}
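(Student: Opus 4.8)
The plan is to prove the two inclusions $\Abs{T}\subset\Abs{W}$ and $\Abs{W}\subset\Abs{T}$ separately, where I recall that for a subgraph $H\subset G$ the set $\Abs{H}$ consists of those ends of $G$ that lie in the closure of $V(H)$, i.e.\ that are witnessed by a comb in $G$ with all teeth in $V(H)$ (equivalently, by an infinite $H$--$\omega$ fan). The inclusion $\Abs{W}\subset\Abs{T}$ is immediate and requires no hypothesis on $T$: since $W\subset V(T)$, any comb \at $W$ is in particular a comb \at $V(T)$, so every end in the closure of $W$ is also in the closure of $T$.

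For the reverse inclusion $\Abs{T}\subset\Abs{W}$ I would use that $T$ is a rooted tree containing $W$ \emph{cofinally}, meaning that every vertex of $T$ lies below some vertex of $W$ in the tree-order. Fix an end $\omega\in\Abs{T}$; then there is a comb $C\subset G$ \at $V(T)$ whose spine lies in $\omega$. I want to convert $C$ into a comb \at $W$ with spine still in $\omega$. First I would argue that, after discarding finitely many teeth if necessary, I may assume each tooth $t$ of $C$ lies on a ray $R_t$ of $T$ starting at the root (or at least on a path in $T$ from $t$ up to a vertex of $W$): by cofinality, for each tooth $t\in V(T)$ there is some $w_t\in W$ with $t\le w_t$ in the tree-order, and the tree-path $tTw_t$ witnesses this. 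The natural move is then to extend each of the infinitely many disjoint ``legs'' of the comb $C$ (the finite path from the spine to the tooth $t$) by the tree-path $tTw_t$ inside $T$, thereby relocating the teeth into $W$.

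The main obstacle, which I expect to be the only real content of the proof, is that these extensions $tTw_t$ need not be pairwise disjoint, nor disjoint from the spine of $C$, so the result need not literally be a comb. I would handle this in the standard way: pass to the subtree of $T$ they span together with the spine, observe this subtree is connected and contains infinitely many vertices of $W$, and then either (a) apply the star-comb lemma to this subtree with the infinite set $\{w_t : t\}\subset W$ to extract either a star or a comb \at $W$ inside it — ruling out the star since a star would force a vertex dominating the spine or producing a different end, and in any case the comb so produced can be arranged to have spine in $\omega$ because the subtree together with the spine of $C$ still ``converges'' to $\omega$; or (b), more directly, use that in a tree any connected subgraph containing infinitely many vertices of $W$ and a ray in $\omega$ contains a comb \at $W$ with spine in that ray's end, since ray-subgraphs of trees are essentially unique up to finite initial segments. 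Concretely, the spine of $C$ is a ray $R$ in $\omega$; in the tree $T$ the ``end direction'' of $R$ corresponds to a unique branch, and infinitely many of the $w_t$ can be chosen to hang off $R$ via disjoint tree-paths (using that $T$ is a tree, so the paths $tTw_t$ for distinct tree-levels meet $R$ in controlled ways), yielding the desired comb \at $W$ with spine in $\omega$. Either way the conclusion is $\omega\in\Abs{W}$, completing the inclusion and hence the lemma. I would keep the write-up short by citing the star-comb lemma and the basic structure theory of rays in trees rather than spelling out the disjointification by hand.
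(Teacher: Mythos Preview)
This lemma is quoted from~\cite{StarComb1StarsAndCombs} and the present paper gives no proof of it, so there is nothing here to compare your attempt against. On its own merits, the nontrivial inclusion $\Abs{T}\subset\Abs{W}$ has a real gap in both of your suggested routes.

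Route~(b) rests on the sentence ``in the tree $T$ the end direction of $R$ corresponds to a unique branch''. But $R$ is a ray of $G$, not of $T$, and $T$ is only assumed to be a rooted subtree of $G$---not normal---so nothing ties $\omega$ to any branch of~$T$, and the remainder of~(b) does not get off the ground. Route~(a) is closer, but the crucial claim that $H=C\cup\bigcup_t tTw_t$ ``still converges to~$\omega$'' is exactly what needs proof: a priori the tree-paths $tTw_t$ could assemble into rays of $T$ lying in ends of $G$ other than~$\omega$, so the star--comb lemma applied in $H$ might return a comb whose spine is in the wrong end. Your dismissal of the star case (``would force a vertex dominating the spine'') is likewise not an argument.

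What is missing is a single idea: given finite $X\subset V(G)$, enlarge it to $X':=X\cup D$ where $D$ is the down-closure of $X\cap V(T)$ in~$T$. This $X'$ is still finite, and now any ascending tree-path $tTw$ with $t\notin X'$ avoids $X'$ altogether, because $X'\cap V(T)=D$ is down-closed in~$T$. Picking $t\in V(T)\cap C(X',\omega)$ (possible since $\omega\in\Abs{T}$) and then $w\in W$ with $t\le_T w$ by cofinality gives $w\in C(X',\omega)\subset C(X,\omega)$, whence $\omega\in\Abs{W}$. This down-closure trick is also precisely what you would need to show that every ray of $H$ lies in~$\omega$ and thereby salvage route~(a); once you have it, the detour through the star--comb lemma becomes unnecessary.
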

Recall that for a graph $G$ and a normal tree $T\subset G$ the \emph{generalised up-closure} $\guc{x}$ of a vertex $x\in T$ is the union of $\uc{x}$ with the vertex set of $\bigcup \cC(x)$, where the set $\cC(x)$ consists of those components of $G-T$ whose neighbourhoods meet $\uc{x}$.
\begin{lemma}[{\cite[Lemma~2.10]{StarComb1StarsAndCombs}}]\label{lemma: separation properties normal tree}
Let $G$ be any graph and $T\subset G$ any normal tree.
\begin{enumerate}
    \item Any two vertices $x,y\in T$ are separated in $G$ by the vertex set $\dc{x}\cap\dc{y}$.
    \item Let $W\subset V(T)$ be down-closed. Then the components of $G-W$ come in two types: the components that avoid $T$; and the components that meet $T$, which are spanned by the 
    sets $\guc{x}$ with $x$ minimal in $T-W$. 
\end{enumerate}
\end{lemma}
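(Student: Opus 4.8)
\emph{Strategy.} Both assertions rest on the single defining property of a normal tree---that the endvertices of every $T$-path in $G$ are comparable in the tree-order---applied in two different guises. For (i) I would trace an arbitrary path through its vertices in $T$; for (ii) I would exploit the resulting fact that the neighbourhood in $T$ of any component of $G-T$ is a chain. I extract these two consequences and then read off the two parts.

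\emph{Part (i).} If $x,y$ are comparable then one of them already lies in $\dc x\cap\dc y$, so assume $x,y$ incomparable and set $z:=x\wedge y$, so that $\dc x\cap\dc y=\dc z$. Let $P$ be any $x$--$y$ path in $G$ and list its vertices lying in $T$ as $t_0=x,t_1,\dots,t_k=y$, in the order they occur along $P$. Each segment of $P$ between consecutive $t_i,t_{i+1}$ is a single edge or a $T$-path, so normality makes $t_i$ and $t_{i+1}$ comparable. It then suffices to prove the purely order-theoretic fact that a sequence with consecutive entries comparable must contain some $t_i\le z=t_0\wedge t_k$. I would argue by contradiction: if $t_i\not\le z$ for all $i$, then a short case split on the comparability of $t_i$ and $t_{i+1}$ shows that each $t_i>z$ and that $t_i,t_{i+1}$ always fall into the same component of $T-z$; hence all $t_i$ lie in the component containing $x$, contradicting that $t_k=y$ lies in a different one. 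So some $t_i\le z$ lies on $P$ and in $\dc z=\dc x\cap\dc y$, as required.

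\emph{Part (ii).} Fix $W$ down-closed. The key consequence of normality here is that for every component $C$ of $G-T$ the set $N(C)$ is a chain in $T$, since any two of its vertices are joined through $C$ by a $T$-path and hence comparable. Let $x$ be minimal in $T-W$. As $W$ is down-closed, $\uc x$ is disjoint from $W$ and, being the vertex set of the subtree of $T$ at $x$, is connected; each $C\in\cC(x)$ lies in $G-T\subset G-W$ and attaches to $\uc x$, so $\guc x$ is connected in $G-W$. To see $\guc x$ is a full component I would check it has no neighbour in $(V\setminus W)\setminus\guc x$: a $T$-neighbour $u$ of some $v\in\uc x$ is comparable to $v$, so lies on the chain $\dc v\ni x$, forcing $u\in\uc x$ or $u<x$, and the latter puts $u\in W$ by minimality of $x$; and a neighbour of some $C\in\cC(x)$ lies in $T$ (as $C$ is a whole component of $G-T$), where the chain property of $N(C)$ together with the same comparison yields $N(C)\subset\uc x\cup W$. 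Thus every external neighbour of $\guc x$ lies in $W$, so $\guc x$ is exactly one component of $G-W$, meeting $T$ in $x$. Conversely, given a component $D$ of $G-W$ meeting $T$, pick $t\in D\cap T$ and let $x\le t$ be the least ancestor of $t$ outside $W$; down-closedness of $W$ makes $x$ minimal in $T-W$ and the $x$--$t$ path in $T$ avoid $W$, so $t\in\guc x$ and hence $D=\guc x$. Distinct minimal vertices give distinct sets since $\guc x\cap T=\uc x$, and the remaining components are precisely those contained in $V\setminus T$, giving the dichotomy.

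\emph{Main obstacle.} The crux is the neighbourhood control in (ii): ruling out that some $C\in\cC(x)$ links $\guc x$ to another branch of $T$, or to a vertex below $x$ outside $W$. This is exactly where the chain property of $N(C)$---and through it normality---does the decisive work. The analogous delicate point in (i) is the sequence lemma, whose verification is the only genuinely combinatorial step there; everything else is bookkeeping with down- and up-closures.
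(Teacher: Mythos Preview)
The paper does not prove this lemma: it is imported verbatim from the first paper of the series (cited as \cite[Lemma~2.10]{StarComb1StarsAndCombs}) and stated here without proof, so there is no argument in the present paper to compare against.

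Your proof is correct and is essentially the standard one. Part~(i) is handled cleanly; the ``sequence lemma'' you isolate is indeed the heart of it, and your inductive argument that all $t_i$ stay above $z$ in the same branch is sound (the key observation being that if $t_{i+1}\le t_i$ then $t_{i+1}\in\dc{t_i}$, which is a chain containing~$z$). For part~(ii) your neighbourhood analysis is complete: you correctly check both kinds of potential escape---a $G$-edge from $\uc{x}$ into $T\setminus\uc{x}$, and a component $C\in\cC(x)$ with a neighbour outside $\uc{x}\cup W$---and in each case the chain structure forces the external neighbour below~$x$ and hence into~$W$. One small point you leave implicit but which is harmless: a $G$-neighbour $u\notin T$ of some $v\in\uc{x}$ automatically lies in some $C\in\cC(x)$ (since $v\in N(C)\cap\uc{x}$), so no escape is possible there either. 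The converse direction and the disjointness of the $\guc{x}$ are fine.
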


\begin{lemma}[{\cite[Lemma~2.11]{StarComb1StarsAndCombs}}]\label{lemma: nts reflect ends in closure}
If $G$ is any graph and $T\subset G$ is any normal tree,
then every end of $G$ in the closure of $T$ contains exactly one normal ray of $T$.
Moreover, sending these ends to the normal rays they contain defines a bijection between $\Abs{T}$ and the normal rays of~$T$.
\end{lemma}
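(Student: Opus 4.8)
\emph{The `moreover' part follows readily once the first statement is established, so I would concentrate on showing that every end $\omega\in\Abs{T}$ contains exactly one normal ray of $T$.}
Granting this, the assignment $\omega\mapsto R_\omega$ (its unique normal ray) is well defined; it is injective because an end is determined by any one of its rays, and it is surjective because any normal ray $R$ of $T$ is itself a comb \at $V(T)$ with spine $R$, so the end containing $R$ lies in $\Abs{T}$ and is sent back to $R$ by uniqueness. Throughout I would use standard properties of normal trees together with Lemma~\ref{lemma: separation properties normal tree}: for $x\in T$ the down-closure $\dc{x}$ is a finite separator, the generalised up-closure $\guc{x}$ is connected with $\guc{x}\cap\dc{x}=\{x\}$ and $N(\guc{x})\subseteq\dc{x}$, and for down-closed $W$ the components of $G-W$ meeting $T$ are exactly the sets $\guc{y}$ with $y$ minimal in $T-W$.

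For the existence of a normal ray in a given $\omega\in\Abs{T}$, I would build an increasing sequence $x_0<x_1<x_2<\cdots$ in $T$, starting from the root $x_0$, maintaining the invariant that $\omega$ lives in $\guc{x_n}$. Given $x_n$, consider the finite separator $\dc{x_n}$ and let $D$ be the component of $G-\dc{x_n}$ in which $\omega$ lives; since $N(\guc{x_n})\subseteq\dc{x_n}$ and $\omega$ lives in $\guc{x_n}$, we get $D\subseteq\guc{x_n}\setminus\{x_n\}$. The crucial point is that $D$ meets $T$: fixing a comb $C$ \at $V(T)$ with spine in $\omega$ (which exists as $\omega\in\Abs{T}$), its spine has a tail in $D$, and since the spine-to-teeth paths of $C$ are pairwise disjoint while $\dc{x_n}$ is finite, all but finitely many of these paths, together with their teeth, lie inside $D$; as the teeth lie in $V(T)$, infinitely many vertices of $T$ lie in $D$. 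Hence, by Lemma~\ref{lemma: separation properties normal tree}(ii), $D=\guc{y}$ for a unique minimal $y\in T-\dc{x_n}$, which (being in $\guc{x_n}\setminus\{x_n\}$) is a successor of $x_n$; I set $x_{n+1}:=y$, preserving the invariant. The resulting ray $R=x_0x_1\cdots$ is a normal ray of $T$, and since its tail from $x_{n+1}$ lies in $\guc{x_{n+1}}=D$ for every $n$, both $R$ and $\omega$ live in the same component of $G-\dc{x_n}$ for all $n$; as the $\dc{x_n}$ are nested finite separators exhausting $V(R)$, this yields infinitely many disjoint paths between $R$ and the spine of $C$, so $R\in\omega$.

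Uniqueness is the easy half. If $R$ and $R'$ were two distinct normal rays in $\omega$, let $t$ be their last common vertex and $c\neq c'$ the successors of $t$ on $R$ and $R'$. Then the tails of $R$ and $R'$ lie in $\guc{c}$ and $\guc{c'}$ respectively, and by Lemma~\ref{lemma: separation properties normal tree}(ii) (applied with $W=\dc{t}$) these sit in distinct components of $G-\dc{t}$. Thus the finite set $\dc{t}$ separates a tail of $R$ from a tail of $R'$, contradicting $R,R'\in\omega$. Combining existence and uniqueness gives the first statement, and hence the claimed bijection.

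I expect the main obstacle to be the existence step, specifically the verification that the component in which $\omega$ lives keeps meeting $T$ at every stage: this is exactly where the hypothesis $\omega\in\Abs{T}$ is consumed, and it relies on the disjointness of the comb's paths together with the finiteness of the separators $\dc{x_n}$. The remaining bookkeeping---that the constructed ray actually represents $\omega$, and that the resulting end map is a bijection---is routine once this is in place.
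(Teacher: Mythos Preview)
The paper does not actually prove this lemma: it is quoted verbatim from the first paper of the series (as Lemma~2.11 there) and used as a black box. So there is no in-paper proof to compare against.

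Your argument is correct and is essentially the standard proof of this fact. The one place I would tighten is the sentence ``as the $\dc{x_n}$ are nested finite separators exhausting $V(R)$, this yields infinitely many disjoint paths between $R$ and the spine of~$C$'': the sets $D_n=\guc{x_{n+1}}$ need not eventually avoid an arbitrary finite set $X$ (a vertex in a component of $G-T$ with infinite neighbourhood along $R$ can lie in every~$D_n$), so you cannot simply take $R$--$S$ paths inside the $D_n$ and declare them disjoint. A clean fix is to argue directly that no finite $X$ separates $R$ from the spine $S$: choose $n$ so large that $\uc{x_n}\cap X\cap V(T)=\emptyset$, then pick a tooth $u\in\uc{x_n}$ whose comb-path $Q$ to $S$ avoids $X$ and lands on the $X$-avoiding tail of $S$ (only finitely many comb-paths fail any of these conditions). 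The $T$-path from $x_n$ up to $u$ lies in $\uc{x_n}\subset V(T)$ and hence avoids $X$, so concatenating it with $Q$ connects the tail of $R$ to the tail of $S$ inside $G-X$. With this adjustment the proof goes through exactly as you outlined.
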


\begin{lemma}\label{distanceClassesDispersed}
Let $G$ be a connected graph, let $D_0,D_1,\ldots$ be the distance classes of $G$ with respect to an arbitrary vertex of $G$, and let $n\ge 1$.
Then for every infinite $U\subset D_n$ the induced subgraph $G[D_0\cup\cdots\cup D_n]$ contains a star \at $U$.
\end{lemma}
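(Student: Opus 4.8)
The plan is to prove the lemma by induction on $n$, having fixed once and for all the vertex $v$ of $G$ whose distance classes are $D_0=\{v\},D_1,D_2,\dots$. For the base case $n=1$, any infinite set $U\subset D_1=N_G(v)$ is the leaf set of the (unsubdivided) infinite star with centre $v$ and edges $vu$ for $u\in U$, and this star lies in $G[D_0\cup D_1]$, as required.

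For the induction step, suppose $n\ge 2$ and that the assertion holds for $n-1$, and let $U\subset D_n$ be infinite. Every $u\in U$ has a neighbour in $D_{n-1}$, namely the penultimate vertex of any shortest $v$--$u$ path, so fix a map $f\colon U\to D_{n-1}$ with $uf(u)\in E(G)$ for all $u\in U$. I would then distinguish two cases according to the fibres of $f$. First, if some $w\in D_{n-1}$ has infinite preimage $U':=f^{-1}(w)$, then the infinite star with centre $w$ and edges $wu$ for $u\in U'$ is a star \at $U$ inside $G[D_0\cup\dots\cup D_n]$, and we are done. Otherwise all fibres of $f$ are finite, so $W:=f(U)\subset D_{n-1}$ is infinite; by the induction hypothesis, $G[D_0\cup\dots\cup D_{n-1}]$ contains a star $S$ \at $W$, with centre $c$ and infinite leaf set $L\subset W$, whose arms are pairwise-internally-disjoint $c$--$\ell$ paths $(\ell\in L)$. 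For each $\ell\in L$ choose a vertex $u_\ell\in f^{-1}(\ell)\subset U$; these are pairwise distinct because $f(u_\ell)=\ell$. Adjoining to $S$ the new vertices $u_\ell$ and the pendant edges $\ell u_\ell$ then yields a tree $S'$ with centre $c$ whose arms are the arms of $S$, each extended by the corresponding edge $\ell u_\ell$; all leaves of $S'$ are the vertices $u_\ell$, which lie in $U$, and $S'\subset G[D_0\cup\dots\cup D_n]$, which closes the induction.

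The one point I would make explicit is why $S'$ really is a \emph{subdivided} infinite star, with no extra branch vertices and no accidental identifications among the arms: this is because each newly adjoined endvertex $u_\ell$ lies in $D_n$, which is disjoint from $V(S)\subset D_0\cup\dots\cup D_{n-1}$, and the $u_\ell$ are pairwise distinct, so attaching the edges $\ell u_\ell$ merely turns the former leaves $\ell$ of $S$ into vertices of degree $2$ while $c$ remains the unique vertex of infinite degree. I do not expect a genuine obstacle in this argument — it is elementary throughout. If anything, the only real decision is the case split above, and it is unavoidable: for the graph consisting of $v$ joined to a single vertex $w$ joined in turn to infinitely many further vertices, with $U=D_2$, no star \at $U$ in $G[D_0\cup D_1\cup D_2]$ can use distinct penultimate vertices, so the ``infinite fibre'' case must be handled on its own.
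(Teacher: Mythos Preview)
Your proof is correct, but it takes a different route from the paper. The paper's argument is a one-liner: take any spanning tree $T$ of $G[D_0\cup\cdots\cup D_n]$ whose $k$th level equals $D_k$ for all $k\le n$; such a tree has height~$n$, hence is rayless, so (by the star--comb lemma applied inside~$T$) it contains a star \at~$U$. Your argument instead proceeds by induction on~$n$, pushing the infinite set $U$ down one distance class via a neighbour map and splitting into the ``some fibre infinite'' and ``image infinite'' cases.

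Both arguments are sound and elementary; the trade-off is that the paper's version is shorter but relies on the star--comb lemma (or, equivalently, the fact that every infinite rayless tree contains an infinite star), whereas your induction unfolds that fact explicitly for trees of bounded height and is self-contained. Your explicit verification that adjoining the pendant edges $\ell u_\ell$ does not create spurious identifications---because $D_n$ is disjoint from $V(S)$ and the $u_\ell$ are pairwise distinct---is exactly the point one has to check, and you handle it cleanly.
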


\begin{proof}
Consider any spanning tree $T$ of $G[D_0\cup\cdots \cup D_n]$ whose $k$th level is equal to $D_k$ for all~$k\le n$.
As $T$ is rayless, it contains a star \at~$U$.
\end{proof}

\begin{theorem}[Jung]\label{thm: NT and dispersed sets}
Let $G$ be any graph. A vertex set $W\subset V(G)$ is normally spanned if and only if it is a countable union of dispersed sets. In particular, 
$G$ is normally spanned if and only if $V(G)$ is a countable union of dispersed sets.
\end{theorem}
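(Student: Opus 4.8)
Throughout, write $\Abs{X}=\emptyset$ for the statement that no end of $G$ lies in the closure of $X$; by the paper's definition of `in the closure' this says precisely that $G$ contains no comb \at $X$, i.e.\ that $X$ is \emph{dispersed}. I would use repeatedly that a subset of a dispersed set is dispersed and that a finite union of dispersed sets is dispersed (separate a given ray from each piece by a finite set and take the union). Since a normal tree is connected, I would treat the components of $G$ separately and so assume $G$ connected. The forward implication is routine; the backward one carries the content.

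For the forward implication, suppose $W\subset V(T)$ for a normal tree $T\subset G$, and let $L_0,L_1,\dots$ be the levels of $T$. It suffices to show each $L_n$ is dispersed, for then $W=\bigcup_n(W\cap L_n)$ is a countable union of dispersed sets. So suppose a comb were \at $L_n$, with spine in an end $\omega$. Then $\omega\in\Abs{T}$, so by Lemma~\ref{lemma: nts reflect ends in closure} the end $\omega$ contains a unique normal ray $R_\omega$ of $T$, which meets $L_n$ in a single vertex $t$. Using Lemma~\ref{lemma: separation properties normal tree}(i) I would check that the finite set $\dc{t}$ separates a tail of $R_\omega$ from $L_n\setminus\{t\}$; since the spine also lies in $\omega$, a tail of it lies in the same component, so it too is separated from $L_n\setminus\{t\}$ by $\dc t$. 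This contradicts the existence of infinitely many disjoint spine--$L_n$ paths in the comb.

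For the backward implication I would write $W=\bigcup_n D_n$ with the $D_n$ dispersed and increasing, and build an increasing sequence of normal trees $T_0\subset T_1\subset\cdots$ with a common root so that, for each $n$, $D_n\subset V(T_n)$ and every component of $G-T_n$ has finite neighbourhood; their union is then a normal tree containing $W$. The point of the finite-neighbourhood invariant is that it is exactly what permits extension: the neighbourhood of a component of $G-T_n$ is automatically a chain in the normal tree, and when finite it has a top vertex below which new material can be hung normally. The construction thus reduces to one extension lemma: \emph{in a connected graph, every dispersed set $X$ is contained in a rayless normal tree rooted at a prescribed vertex}. Granting this, I apply it inside each component $C$ of $G-T_n$ to the dispersed set $D_{n+1}\cap C$, glue the resulting trees below the tops of the sets $N(C)$, and note that a rayless normal tree has only finite chains, so all its components have finite neighbourhood; this re-establishes the invariant while capturing $D_{n+1}$.

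It remains to prove the extension lemma, which I regard as the heart of the matter. I would build the tree $S$ by transfinite recursion along an enumeration $X=\{x_\alpha\}$, maintaining that $X$ is \emph{cofinal} in the tree built so far, meaning every vertex lies below some element of $X$; at a successor step I hang a path down to $x_\alpha$ inside its current component, and at a limit step I take unions. The crux, and the main obstacle, is that the recursion must never stall at a limit stage for lack of a top vertex to attach below. Here cofinality is decisive: if $S_{<\lambda}$ is the tree built before a limit $\lambda$, then the cofinal set $X\cap V(S_{<\lambda})$ is dispersed, so Lemma~\ref{lemma: closure and tree containing U cofinally} gives $\Abs{S_{<\lambda}}=\Abs{X\cap V(S_{<\lambda})}=\emptyset$, whence Lemma~\ref{lemma: nts reflect ends in closure} shows $S_{<\lambda}$ has no normal ray and is therefore rayless; thus every chain, and in particular every component-neighbourhood, is finite, and the next successor step can proceed. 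The same argument applied to the final tree shows it is rayless as required. The residual work is the careful but routine bookkeeping that each attachment preserves normality (using that each $N(C)$ lies on a single branch) and that cofinality survives both successor and limit steps.
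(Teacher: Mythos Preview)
The paper does not prove this theorem; it merely cites it as Jung's result (see the reference to \cite[Satz~6]{jung69} and \cite[Theorem~3.5]{StarComb1StarsAndCombs} in the line introducing Theorem~\ref{thm: NT and dispersed sets}). So there is no in-paper proof to compare against.

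That said, your argument is sound and is essentially the classical proof going back to Jung. A few remarks. Your forward direction is clean; the use of Lemma~\ref{lemma: separation properties normal tree}(i) to see that $\dc{t}$ separates a tail of the normal ray in $\omega$ from $L_n\setminus\{t\}$ is exactly right. For the backward direction, your `extension lemma' is precisely the implication $\neg$(i)$\to$(ii) of Theorem~\ref{thm: comb NT duality} (with the prescribed root added), and your transfinite construction of it is the standard one; the crucial observation that raylessness persists at limit stages via Lemma~\ref{lemma: closure and tree containing U cofinally} and Lemma~\ref{lemma: nts reflect ends in closure} is the heart of the matter, as you note. One point worth making explicit in your write-up: when you glue a rayless normal tree $S_C\subset C$ (rooted at a neighbour $r_C$ of the top $t_C$ of $N(C)$) onto $T_n$ via the edge $t_Cr_C$, the case analysis for normality of $T_{n+1}$ in $G$ needs the fact that any $T_{n+1}$-path with both endpoints in $S_C$ has its interior confined to $C$ (since leaving $C$ would force it through $N(C)\subset T_n\subset T_{n+1}$), so that normality of $S_C$ in $C$ suffices. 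With that spelled out, the `routine bookkeeping' you flag is indeed routine.
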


Now we are ready to prove our first duality theorem for dominated combs:

\begin{proof}[Proof of Theorem~\ref{domCombNTduality}]
First, we show that at most one of (i) and (ii) holds.
Assume for a contradiction that both hold, let $R$ be the spine of a dominated comb \at $U$ and let $T$ be a normal tree as in (ii).
Then the end of $R$ lies in the closure of $U\subset T$, so by Lemma~\ref{lemma: nts reflect ends in closure} the normal tree $T$ contains a normal ray from that end.
But then the vertices dominating $R$ in $G$ also dominate that normal ray, a contradiction.

It remains to show that at least one of (i) and (ii) holds; we show $\neg$(i)$\to$(ii).
For this, let $\cR$ be an inclusionwise maximal collection of pairwise disjoint rays all belonging to ends in the closure of~$U$, and define $\hat{U}:=U\cup\bigcup\,\{\,V(R)\mid R\in\cR\,\}$.
We claim that $\Abs{\hat{U}}=\Abs{U}$.
Clearly, $\hat{U}\supset U$ implies $\Abs{\hat{U}}\supset\Abs{U}$.
For the forward inclusion, we show that every end $\omega$ that does not lie in the closure $U$ also does not lie in the closure of~$\hat{U}$.
Indeed, consider any finite vertex set $X\subset V(G)$ such that the component $C(X,\omega)$ avoids~$U$.
Since the rays in~$\cR$ are pairwise disjoint and belong to ends in the closure of~$U$, the intersection $C(X,\omega)\cap\bigcup\cR$ is finite.
Hence we may extend~$X$ to ensure that $C(X,\omega)$ avoids~$\hat{U}$, showing that $\omega$ does not lie in the closure of~$\hat{U}$ as desired.

Note that every end in the closure of~$U$ is undominated since there is no dominated comb \at ~$U$, and hence every end in the closure of~$\hat{U}$ is undominated as well.

Next, we find a normal tree $T\subset G$ that contains~$\hat{U}$, as follows.
We pick an arbitrary vertex $v_0$ of $G$ and write $D_n$ for the $n$th distance class of $G$ with respect to $v_0$.
If for some distance class $D_n$ there was a comb in $G$ \at $D_n\cap \hat{U}$, then that comb would be dominated by Lemma~\ref{distanceClassesDispersed} contrary to our assumptions.
Therefore, all the sets $D_n\cap \hat{U}$ with $n\in\N$ are dispersed. 
Now, Jung's Theorem~\ref{thm: NT and dispersed sets} yields a normal tree $\hat{T}\subset G$ that contains $\hat{U}$, and by replacing $\hat{T}$ with the down-closure of $\hat{U}$ we may assume that $\hat{T}$ contains $\hat{U}$ cofinally.
Note that the normal rays of $\hat{T}$ cannot be dominated in $G$ because $\Abs{\hat{T}}=\Abs{\hat{U}}=\Abs{U}$ by Lemma~\ref{lemma: closure and tree containing U cofinally}.

We claim that every component $C$ of $G-\hat{T}$ has finite neighbourhood.
For this, assume for a contradiction that some component $C$ of $G-\hat{T}$ has infinite neighbourhood.
Let $R$ be the normal ray in $\hat{T}$ given by the down-closure of that neighbourhood in $\hat{T}$, and write $Z$ for the set of those vertices in $C$ that send edges to $\hat{T}$. 
Since $R$ is undominated in~$G$, every vertex in $Z$ may send only finitely many edges to $R$, and in particular $Z$ must be infinite.
Therefore, we find an infinite subset $Z'\subset Z$ for which $G$ contains a matching of $Z'$ and an infinite subset of $V(R)$.
Applying the star-comb lemma in $C$ to $Z'$ then, as $R$ was just noted to be undominated, must yield a comb in $C$ \at $Z'$.
That comb's spine $R'$ is equivalent in $G$ to~$R$ and avoids $\hat{U}$, contradicting the choice of $\hat{U}$.

Finally, let $T\subset G$ be the normal tree given by the down-closure of $U$ in $\hat{T}$.
Then $T$ contains $U$ cofinally.
We claim that every component of $G-T$ has a~finite neighbourhood.
Indeed, consider any component $C$ of $G-T$. 
If $C$ is also a component of $G-\hat{T}$, then---as we have already seen---it has a finite neighbourhood. 
Otherwise, by Lemma~\ref{lemma: separation properties normal tree}, the component $C$ is spanned by $\guc{x}$ with respect to $\hat{T}$ for the minimal node $x$ in $C\cap \hat{T}$.
Now, as $\hat{T}$ is normal, $C$ can only send edges to the finite set $\dc{x}\setminus\{x\}$. Hence the component $C$ has finite neighbourhood as claimed.
\end{proof}

Let us discuss an application of our duality theorem for dominated combs in terms of normal trees. 
In \cite{VTopComp}, Diestel proves the following theorem that relates the metrisability of $|G|$ to the existence of normal spanning trees (we refer to \cite[Section~2]{VTopComp} for definitions concerning $\vert G\vert$, \MTop, \VTop\ and \Top): 

\begin{theorem}[{\cite[Theorem~3.1]{VTopComp}}]\label{thm: metrisability and NST}
Let $G$ be any connected graph. \begin{enumerate}
    \item In \MTop, $|G|$ is metrisable if and only if $G$ has a normal spanning tree. 
    \item In \VTop, $|G|$ is metrisable if and only if no end of $G$ is dominated. 
    \item In \Top, $|G|$ is metrisable if and only if $G$ is locally finite. 
\end{enumerate}
\end{theorem}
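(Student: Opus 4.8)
My plan is to organise all three equivalences around Jung's Theorem~\ref{thm: NT and dispersed sets} and the corollary above (a connected graph none of whose ends is dominated is normally spanned), using the standard facts about $\MTop$, $\VTop$ and $\Top$ recalled in \cite[Section~2]{VTopComp}: that the three topologies coincide when $G$ is locally finite, and that $\VTop$ and $\MTop$ coincide precisely when no end of $G$ is dominated. Note first that the three graph-theoretic conditions are nested: local finiteness implies that no end is dominated (a dominating vertex is the centre of an infinite fan, hence has infinite degree), which in turn implies---by the corollary---that $G$ is normally spanned. So it will be enough to prove (i) in full and then deduce (ii) and (iii) from it, together with the coincidence facts just quoted and short arguments showing that the forbidden structures (a dominated end, a vertex of infinite degree) obstruct metrisability.

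For the backward direction of (i), let $T$ be a normal spanning tree of $G$. By Lemma~\ref{lemma: nts reflect ends in closure}, applied with $T$ spanning so that $\Abs{T}$ is the whole end space, the ends of $G$ correspond bijectively to the normal rays of $T$. I would then define a metric $d$ on $|G|$ directly from the tree order of $T$: assign to a vertex $x$ the weight $2^{-\ell(x)}$, where $\ell(x)$ is the level of $x$ in $T$; interpolate this weight linearly along every edge of $G$, tree or not, and continuously along normal rays out to the ends; and let $d(p,q)$ be the difference of the weights of $p$ and $q$ plus $2^{-\ell(z)}$, where $z$ is the highest vertex of $T$ lying below both $p$ and $q$. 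The verification that $d$ is a metric inducing $\MTop$ is routine but is the technical core of the theorem; the only delicate point is at a vertex $x$ of infinite degree in $T$, where the balls around $x$ still form a neighbourhood basis because normality of $T$ forces every edge at $x$ to run between $T$-comparable vertices. For the forward direction of (i), fix a metric compatible with $\MTop$ and run a standard $\sigma$-discreteness argument: $V(G)$ is a discrete subset of $|G|$ all of whose accumulation points are ends, so if $\Omega'$ denotes this (closed) set of accumulation ends, then $V(G)=\bigcup_{n}W_n$ with $W_n$ the set of vertices at distance $\ge 1/n$ from $\Omega'$, and each $W_n$ is a closed vertex set with no end in its closure, i.e.\ a dispersed set. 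Jung's Theorem~\ref{thm: NT and dispersed sets} then yields a normal spanning tree.

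For (ii): if no end of $G$ is dominated then $G$ is normally spanned by the corollary, hence $|G|$ is $\MTop$-metrisable by (i), and since no end is dominated we have $\VTop=\MTop$, so $|G|$ is $\VTop$-metrisable. Conversely, suppose some end $\omega$ is dominated, say by a vertex $v$, and fix an infinite $v$--$(R-v)$ fan in $G$ witnessing this for a ray $R\in\omega$. Working from the definition of $\VTop$ in \cite[Section~2]{VTopComp}, one checks that this fan binds $v$ to $\omega$ so tightly---every $\VTop$-neighbourhood of $\omega$ carved out by a finite separator misses $v$ only at the cost of cutting the fan---that $\omega$ admits no $\VTop$-neighbourhood basis compatible with a metric; hence $|G|$ is not $\VTop$-metrisable. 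For (iii): a locally finite connected graph is countable with finite distance classes, so its vertex set is a countable union of (finite, hence) dispersed sets and $G$ is normally spanned by Jung's theorem; as $\Top=\MTop$ for locally finite graphs, (i) gives that $|G|$ is $\Top$-metrisable (equivalently, $|G|$ is then the Freudenthal compactification, a compact second-countable Hausdorff space). Conversely, if $G$ has a vertex of infinite degree, then every $\Top$-neighbourhood of it contains an initial segment of all but finitely many incident edges, and a diagonal choice over countably many such neighbourhoods escapes them all, so $\Top$ is not first countable there and $|G|$ is not $\Top$-metrisable.

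The main obstacle is the backward direction of (i): producing the metric read off from a normal spanning tree and checking that it induces exactly $\MTop$, in particular at vertices of infinite degree. The rest is bookkeeping with the neighbourhood bases of $\VTop$ and $\Top$ for the forward directions of (ii) and (iii), on top of Jung's theorem and the corollary to Theorem~\ref{domCombNTduality}.
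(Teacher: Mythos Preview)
The paper does not prove this theorem: it is quoted from \cite[Theorem~3.1]{VTopComp} and used as a black box. The only fragment reproduced here is the forward direction of~(i), which the paper imitates in the proof of Lemma~\ref{lemma: conn metr sss implies nt} (``the sets $U_n$ consisting of the vertices in $U$ that have distance $\ge 1/n$ from every end''); your $\sigma$-discreteness argument for that direction is the same idea. So there is nothing in the present paper to compare your sketch against beyond that one step.

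That said, two remarks on your sketch as it stands. First, the metric you write down for the backward direction of~(i) is not quite right: ``the difference of the weights of $p$ and $q$ plus $2^{-\ell(z)}$'' does not satisfy the triangle inequality in general (and gives $d(p,p)>0$). The standard construction assigns to each edge of $T$ at level~$n$ the length $2^{-n}$, extends to non-tree edges via their $T$-path, and takes the resulting path metric; you should phrase it that way. Second, your forward argument for~(ii) is too vague: the clean statement is that if $v$ dominates $\omega$ then in \textsc{VTop} every basic open neighbourhood of $\omega$ contains~$v$, so $\omega$ and $v$ have no disjoint neighbourhoods and $|G|$ is not even Hausdorff, let alone metrisable. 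Your arguments for~(iii) are fine.
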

\noindent Assertions (ii) and (iii) of this theorem can be reformulated so as to speak about normal spanning trees: 
By Theorem~\ref{domCombNTduality} with $U=V(G)$, the graph $G$ having no dominated end is equivalent to $G$ having a normal spanning tree all of whose normal rays are undominated. 
And by Theorem~\ref{thm: star NT duality} with $U=V(G)$, the graph $G$ being locally finite is equivalent to $G$ having a locally finite normal spanning tree all of whose normal rays are undominated.
That is why we may hope that these theorems allow us to localise Theorem~\ref{thm: metrisability and NST} above to arbitrary vertex sets $U\subseteq V(G)$.
We will show that this is possible.

Recall that a \emph{standard subspace} of $|G|$ (with regard to \MTop, \VTop\ or \Top) is a subspace $Y$ of $|G|$ that is the closure $\overline{H}$ of a subgraph $H$ of $G$ (see Diestel's textbook~\cite[p.\,246]{DiestelBook5}).

\begin{lemma}\label{lemma: normal tree SS coincides with T with tops}
Let $G$ be any graph, let $T\subseteq G$ be any normal tree and consider the spaces $|T|$ and $|G|$, both in the same choice of one of the three topologies \MTop, \VTop\ or \Top. Then $|T|$ is homeomorphic to the standard subspace $\overline{T}$ of $|G|$.
\end{lemma}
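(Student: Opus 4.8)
The plan is to exhibit an explicit homeomorphism. Recall that in each of the three topologies the underlying point set of $|G|$ is $\dot G\cup\Omega(G)$, where $\dot G$ denotes $G$ viewed as a topological graph (the $1$-complex of its vertices and edges) and $\Omega(G)$ its set of ends; likewise $|T|$ has point set $\dot T\cup\Omega(T)$, with $\dot T$ sitting inside $\dot G$ as the topological graph of the subgraph $T$. I would define $\varphi\colon|T|\to|G|$ to be the identity on $\dot T$ and to send an end $\eta$ of $T$ to the end of $G$ containing the normal ray of $T$ representing $\eta$ (each end of the rooted tree $T$ contains exactly one normal ray; a ray of $T$ is a ray of $G$ and equivalent rays of $T$ stay equivalent in $G$, so this is well defined). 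First I would record that $\varphi$ is a bijection onto $\dot T\cup\Abs{T}$. It is visibly injective on $\dot T$, with image $\dot T\subseteq\dot G$. On $\Omega(T)$ it is the composite of $\eta\mapsto(\text{its normal ray})$ with the inverse of the bijection ``$\omega\mapsto(\text{the unique normal ray of }T\text{ in }\omega)$'' from $\Abs{T}$ onto the normal rays of $T$ furnished by Lemma~\ref{lemma: nts reflect ends in closure}; in particular $\varphi(\Omega(T))=\Abs{T}$ (note $\varphi(\eta)\in\Abs{T}$ since the normal ray of $\eta$ is a comb attached to $V(T)$). As $\Abs{T}\subseteq\Omega(G)$ is disjoint from $\dot T$, the map $\varphi$ is a bijection of $|T|$ onto $\dot T\cup\Abs{T}$.

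Next I would check that $\overline{T}=\dot T\cup\Abs{T}$ as a point set, in each of the three topologies, so that $\varphi$ is a bijection onto the standard subspace $\overline{T}$. From the description ``$\omega\in\Abs{T}$ iff every $C(S,\omega)$ with $S\subseteq V(G)$ finite meets $V(T)$'', together with the fact that each basic neighbourhood of an end contains $C(S,\omega)$ for a suitable finite $S$, one gets $\dot T\cup\Abs{T}\subseteq\overline{\dot T}$; conversely a point of $\dot G\setminus\dot T$ has a small ball disjoint from $\dot T$ (the edges of $T$ are exactly the edges of $G$ with both endpoints in $V(T)$), while for an end $\omega\notin\Abs{T}$ a finite $S$ with $C(S,\omega)\cap V(T)=\emptyset$ yields a basic neighbourhood of $\omega$ whose vertices and edge-segments all miss $\dot T$. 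It then remains to prove that $\varphi$ carries a neighbourhood base at each point of $|T|$ onto a neighbourhood base (in $\overline{T}$) at its image; this makes $\varphi$ at once continuous and open, hence a homeomorphism.

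At a point of $\dot T$ this is immediate: $\dot T$ is open in $|T|$ and --- since $\dot G$ is open in $|G|$ --- also open in $\overline{T}$, and on $\dot T$ the map $\varphi$ is the canonical embedding of the topological graph $T$, under which the subspace topology inherited from $|G|$ coincides with the one inherited from $|T|$ (a standard fact about topological graphs). The substance is at an end $\eta$ of $T$; write $R$ for its normal ray and $\omega:=\varphi(\eta)$. I would use that the neighbourhoods of $\eta$ in $|T|$ coming from the finite down-closed sets $\dc{x}$, $x\in V(R)$, form a neighbourhood base at $\eta$: for such an $x$, with $x'$ the successor of $x$ on $R$, the component of $T-\dc{x}$ at $\eta$ is the cone $\uc{x'}$, and these cones shrink to $\eta$. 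Applying Lemma~\ref{lemma: separation properties normal tree}(ii) with $W=\dc{x}$ identifies the component of $G-\dc{x}$ at $\omega$ with the one spanned by $\guc{x'}$; hence $C(\dc{x},\omega)\cap V(T)=\uc{x'}$, and since a normal ray of $T$ with a tail in $\uc{x'}$ must pass through $x'$, the ends of $\Abs{T}$ with a ray in $G[\guc{x'}]$ are exactly the $\varphi$-images of the ends of $T$ with a ray in $\uc{x'}$. As the only edge of $G$ between $\dc{x}$ and $\guc{x'}$ lying in $\dot T$ is the tree edge $xx'$, it follows that $\varphi$ maps the tree-neighbourhood of $\eta$ determined by $\dc{x}$ exactly onto the trace on $\overline{T}$ of the basic $|G|$-neighbourhood of $\omega$ determined by $\dc{x}$ --- up to the parameter $\epsilon$, and regardless of which of the three conventions for including edge-segments is in force. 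For the reverse direction --- the trace on $\overline{T}$ of an arbitrary basic $|G|$-neighbourhood $\hat{C}(S,\omega)$ of $\omega$ contains the $\varphi$-image of some tree-neighbourhood of $\eta$ --- I would pass from $S$ to its down-closure $\dc{S}$ in $T$: then $C(\dc{S},\omega)\subseteq C(S,\omega)$ and $C(\dc{S},\omega)$ is again of the form $G[\guc{x'}]$ for the appropriate $x\in V(R)$, reducing to the case just treated.

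I expect the main obstacle to be exactly this reduction from an arbitrary finite separator $S\subseteq V(G)$ to a tree-separator. A priori $C(S,\omega)$ can contain $T$-vertices lying outside every single cone $\uc{x}$, because $G-S$ may be far more connected than $T-S$; so the trace on $\overline{T}$ of a $|G|$-neighbourhood of $\omega$ need not sit inside any small neighbourhood of $\eta$. Normality of $T$ is what saves the argument, via Lemma~\ref{lemma: separation properties normal tree}: enlarging $S$ to $\dc{S}$ makes the component of $G-\dc{S}$ at $\omega$ coincide with a single cone $\guc{x'}$. A second, more bookkeeping-type point is to make everything uniform over MTop, VTop and Top; one has to verify that the differing rules about which edge-segments are included in neighbourhoods (at ends, and --- for Top versus MTop --- at vertices) do not affect traces on $\overline{T}$, which holds because every edge of $G$ relevant to such a trace is either an edge of $T$ or disjoint from $\dot T$. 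For the precise definitions of the three topologies I would refer to \cite[Section~2]{VTopComp} and \cite[Chapter~8]{DiestelBook5}.
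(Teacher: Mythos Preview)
Your approach is correct and coincides with the paper's: it too extends the identity on $T$ to a bijection $|T|\to\overline{T}$ via Lemma~\ref{lemma: nts reflect ends in closure} and then invokes Lemma~\ref{lemma: separation properties normal tree} for the homeomorphism check, declaring this ``straightforward''; you have simply spelled out those details. Two small slips to tidy up: your parenthetical ``the edges of $T$ are exactly the edges of $G$ with both endpoints in $V(T)$'' is false in general (normal trees can have chords in $G$), although the conclusion it supports---that $\dot T$ is closed in $\dot G$---holds anyway since $\dot T$ is a subcomplex; and when you pass from an arbitrary finite $S\subseteq V(G)$ to ``its down-closure $\dc{S}$ in $T$'', note that $S$ need not lie in $V(T)$, so instead just pick $x\in R$ high enough that $\uc{x'}\cap S=\emptyset$ (possible since every vertex of $T$ has finite height) and observe that then $T[\uc{x'}]$ is connected, avoids $S$, and contains a tail of $R$, hence lies in $C(S,\omega)$.
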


\begin{proof}
By Lemma~\ref{lemma: nts reflect ends in closure}, the identity on $T$ extends to a bijection $\vert T\vert\to\overline{T}\subset\vert G\vert$ that sends every end of $T$ to the unique end of $G$ including it.
Using Lemma~\ref{lemma: separation properties normal tree} it is straightforward to verify that the bijection is a homeomorphism, no matter which of the three topologies we chose.
\end{proof}

\begin{lemma}\label{lemma: conn metr sss implies nt}
Let $G$ be any connected graph and $U\subseteq V(G)$ any vertex set. If the space $\vert G\vert$ with one of the three topologies \MTop, \VTop\ or \Top\ has a connected metrisable standard subspace containing $U$, then there is a normal tree $T\subset G$ that contains $U$. 
\end{lemma}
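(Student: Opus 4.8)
The plan is to deduce from the metrisability of the given standard subspace that $U$ is a countable union of dispersed sets; Jung's Theorem~\ref{thm: NT and dispersed sets} then provides a normal tree $T\subseteq G$ with $U\subseteq V(T)$, which is exactly the assertion.

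Write the given connected metrisable standard subspace as $Y=\overline H$ for a subgraph $H\subseteq G$. Since $Y$ contains $U$ and no vertex of $G$ outside $H$ lies in $\overline H$, we have $U\subseteq V(H)\subseteq Y$. In each of the topologies \MTop, \VTop, \Top\ every vertex of $G$ has a basic open neighbourhood in $|G|$ that meets $V(G)$ only in that vertex, so $U$ is a discrete subspace of $Y$. Now I would fix a metric $d$ inducing the topology of $Y$. For $u\in U$ the number $r(u):=\inf\{\,d(u,u')\mid u'\in U\setminus\{u\}\,\}$ is positive, hence $U=\bigcup_{n\ge 1}W_n$ where $W_n:=\{\,u\in U\mid r(u)\ge 1/n\,\}$; each $W_n$ is $1/n$-separated and therefore a closed discrete subset of $Y$.

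It then remains to show that every $W_n$ is dispersed, i.e.\ that $G$ contains no comb attached to $W_n$. Suppose some $W_n$ had such a comb, and let $\omega$ be the end of $G$ containing its spine. The teeth of the comb form an infinite subset of $W_n\subseteq V(H)$, and they converge in $|G|$ — hence in $Y$ — to $\omega$; as $Y$ is closed in $|G|$ it follows that $\omega\in Y$, so $\omega$ is a limit point of $W_n$ in $Y$, contradicting that $W_n$ is closed and discrete. Hence every $W_n$ is dispersed, and Jung's theorem finishes the proof.

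The point I expect to need care is the convergence of the teeth of a comb to the end of its spine \emph{within the subspace $Y$}: this is immediate in \MTop\ and \Top, but for \VTop\ one should check that even a dominated end contains all but finitely many of these teeth in each of its neighbourhoods (alternatively, one may first reduce the \VTop\ and \Top\ cases to the \MTop\ case). Everything else rests on the elementary fact that a discrete subspace of a metric space is a countable union of closed discrete subspaces; note that connectedness of $Y$ is not actually needed for this direction.
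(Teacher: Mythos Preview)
Your strategy matches the paper's exactly: write $U$ as a countable union of dispersed sets and apply Jung's Theorem~\ref{thm: NT and dispersed sets}. The paper also fixes a metric on the standard subspace $Y$, but decomposes $U$ differently, taking $U_n$ to be the vertices of $U$ at distance $\ge 1/n$ from every \emph{end} in $Y$ (rather than from every other vertex of $U$); both choices yield closed subsets of~$Y$.

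The one substantive difference is in how dispersedness is verified. You argue directly with combs and hit the \textsc{VTop} issue you yourself flag. The paper sidesteps this entirely by quoting from~\cite{VTopComp} the fact that a vertex set is dispersed in $G$ if and only if it is closed in $|G|$: since $Y=\overline H$ is closed in $|G|$ and each $U_n$ is closed in $Y$, each $U_n$ is closed in $|G|$ and hence dispersed---uniformly for all three topologies, with no case distinction needed. The same shortcut applies verbatim to your $W_n$ (they are $1/n$-separated, hence closed in $Y$, hence closed in $|G|$), so you could replace your comb argument and its caveat by this one line.
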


\begin{proof}
We imitate Diestel's proof of the corresponding implication  of his Theorem~\ref{thm: metrisability and NST}~(i). 
Recall from \cite{VTopComp} that a set of vertices of $G$ is dispersed in $G$ if and only it is closed in $\vert G\vert$. So by Jung's Theorem~\ref{thm: NT and dispersed sets}, it suffices to show that $U$ can be written as a countable union of closed vertex sets. For this, the sets $U_n$ consisting of the vertices in $U$ that have distance $\ge 1/n$ from every end can be taken: 
On the one hand, every $U_n$ is the intersection of complements of open balls of radius $1/n$, and hence closed. 
On the other hand, every vertex $u\in U$ is contained in $U_n$ for some $n\in \N$ because $G$ is open in $\vert G\vert$.
\end{proof}

\begin{theorem}\label{thm: metrisability and NT localised}
Let $G$ be any connected graph and $U\subseteq V(G)$ any vertex set. \begin{enumerate}
    \item In \textsc{MTop} or \textsc{VTop}, $\vert G\vert$ has a connected metrisable standard subspace containing $U$ if and only if there is a normal tree $T\subset G$ that contains $U$. 
    In particular, if there is no dominated comb \at $U$, then $\vert G\vert$ has a connected metrisable standard subspace containing $U$.
    \item In \textsc{Top}, $\vert G\vert$ has a connected metrisable standard subspace containing $U$ if and only if there is a locally finite normal tree $T\subset G$ that contains $U$. 
    In particular, if there is no star \at $U$, then $\vert G\vert$ has a connected metrisable standard subspace containing $U$.
\end{enumerate}
\end{theorem}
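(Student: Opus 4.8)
The plan is to prove both biconditionals by combining the forward implication --- which is essentially Lemma~\ref{lemma: conn metr sss implies nt} --- with a backward implication built from Lemma~\ref{lemma: normal tree SS coincides with T with tops} together with Diestel's Theorem~\ref{thm: metrisability and NST}, and then to read off the two `in particular' clauses from Theorems~\ref{domCombNTduality} and~\ref{thm: star NT duality}.

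For the backward implication of~(i), let $T\subset G$ be a normal tree containing~$U$. A rooted tree is a normal spanning tree of itself, so $|T|$ is metrisable in~\MTop\ by Theorem~\ref{thm: metrisability and NST}\,(i); and a tree contains no infinite fan between a vertex and a ray, so no end of~$T$ is dominated and $|T|$ is metrisable in~\VTop\ by Theorem~\ref{thm: metrisability and NST}\,(ii). By Lemma~\ref{lemma: normal tree SS coincides with T with tops} the standard subspace $\overline T$ of~$|G|$ is homeomorphic to~$|T|$, hence metrisable; and $\overline T$ is connected, being the closure of the connected subgraph~$T$, and it contains $U\subseteq V(T)$. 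The backward implication of~(ii) is the same argument, now with $T$ in addition locally finite and using Theorem~\ref{thm: metrisability and NST}\,(iii) in place of~(i) and~(ii). The two `in particular' clauses follow by combining this with the duality theorems: if $G$ contains no dominated comb \at~$U$ then Theorem~\ref{domCombNTduality} supplies a normal tree $T\subset G$ containing~$U$, and if $G$ contains no star \at~$U$ then Theorem~\ref{thm: star NT duality} supplies even a locally finite such~$T$; feed these into the respective backward implications. Finally, the forward implication of~(i) is exactly Lemma~\ref{lemma: conn metr sss implies nt}.

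What remains is the forward implication of~(ii): from a connected metrisable standard subspace $Y=\overline H$ of~$|G|$ in~\Top\ containing~$U$ we must produce a \emph{locally finite} normal tree of~$G$ containing~$U$. Lemma~\ref{lemma: conn metr sss implies nt} already yields a normal tree $T\subset G$ with $U\subseteq T$, so the task is to arrange local finiteness. First I would observe that, as $Y$ is metrisable it is first countable, and in~\Top\ a vertex incident with infinitely many edges of~$H$ has no countable neighbourhood basis in~$Y$ --- this is the mechanism underlying Theorem~\ref{thm: metrisability and NST}\,(iii), and it is checked by a diagonal argument over the independent $\varepsilon$-shrinkings of the edges at the vertex --- so $H$ must be locally finite. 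Using that $Y$ is connected I would then thin $H$ down to a locally finite \emph{connected} subgraph $H'$ that still contains~$U$ and whose closure is a connected metrisable standard subspace contained in~$Y$, and finally turn $H'$ into a locally finite normal tree of~$G$ containing~$U$.

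I expect this last step to be the main obstacle. A normal spanning tree $T'$ of the locally finite connected subgraph $H'$ is locally finite and contains~$U$, but it need not be normal in~$G$: a path of~$G$ through $V(G)\setminus V(H')$ may join two vertices that are incomparable in the tree-order of~$T'$. The plan would be to reroute~$T'$ along such detours while keeping it locally finite, or to merge~$T'$ with the normal tree already provided by Lemma~\ref{lemma: conn metr sss implies nt}, exploiting that $\overline{H'}$ sees only ends of~$G$ lying in the closure of~$U$, so that no `new' ends have to be accommodated. Carrying this out without reintroducing a vertex of infinite degree is where the real work lies.
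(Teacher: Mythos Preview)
Your treatment of~(i) and of the backward implication and `in particular' clause of~(ii) is correct and matches the paper's proof essentially line for line.

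The forward implication of~(ii), however, has a genuine gap, and your proposed route is a detour around a much shorter path. You try to build a \emph{new} locally finite normal tree out of the subgraph~$H'$, and you correctly identify that a normal spanning tree of~$H'$ need not be normal in~$G$; your plan to `reroute' or `merge' is left unfinished, and it is not clear it can be finished without further ideas. The paper avoids this entirely: it shows that the normal tree~$T$ you already have from Lemma~\ref{lemma: conn metr sss implies nt} is \emph{itself} locally finite, after one harmless modification. Replace~$T$ by the down-closure of~$U$ in~$T$, so that~$T$ contains~$U$ cofinally. Now suppose~$T$ has a node~$t$ of infinite degree. Then $X:=\dc{t}$ is a finite set such that infinitely many components of $G-X$ meet~$U$ (each of the infinitely many children of~$t$ lies in a different such component by Lemma~\ref{lemma: separation properties normal tree}, and cofinality of~$U$ puts a $U$-vertex above each child). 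Since $\overline{H}$ is connected, $H$ must contain an edge from each of these components to~$X$; by pigeonhole some vertex of~$X$ has infinite degree in~$H$, contradicting the metrisability of~$\overline{H}$ in~\Top\ via exactly the first-countability argument you sketched.

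So the missing idea is not a construction but an observation: the normal tree supplied by Lemma~\ref{lemma: conn metr sss implies nt}, once pruned to contain~$U$ cofinally, is forced to be locally finite by the metrisability hypothesis. No second tree is needed.
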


\begin{proof}
(i) The forward implication is covered by Lemma~\ref{lemma: conn metr sss implies nt}. Now, suppose that there is a normal tree $T\subset G$ containing $U$ and consider the standard subspace $\overline{T}$.
By Lemma~\ref{lemma: normal tree SS coincides with T with tops} the spaces $\overline{T}$ and $|T|$ are homeomorphic. Then $\vert T\vert$ is metrisable by Theorem~\ref{thm: metrisability and NST}~(i) respectively~(ii).
The `in particular' part is a consequence of Theorem~\ref{domCombNTduality}.

(ii) For the forward implication we apply Lemma~\ref{lemma: conn metr sss implies nt} to a given standard subspace $\overline{H}$ and $U\subset H$ to obtain a normal tree $T$ that contains~$U$. By taking the down-closure of $U$ in $T$ we may assume that $T$ contains $U$ cofinally. We claim that $T$ is locally finite. Indeed, if $T$ is not locally finite then there is a finite vertex set $X$ of $G$ such that infinitely many components of $G-X$ meet~$U$.
Since $\overline{H}$ is connected, the subgraph $H$ of $G$ must contain an edge between each component and $X$.
By the pigeonhole principle, some vertex of $X$ has infinite degree in~$H$, contradicting the fact that $\overline{H}$ is metrisable.

For the backward implication, suppose that there is a locally finite normal tree $T\subseteq G$ containing~$U$. By Lemma~\ref{lemma: normal tree SS coincides with T with tops} we have that the standard subspace that arises from $T$ is homeomorphic to $|T|$ with \Top. Since $T$ is locally finite, \Top\ coincides with \MTop\ on $|T|$ which is metrisable by Theorem~\ref{thm: metrisability and NST}~(i). 

The `in particular' part is a consequence of Theorem~\ref{thm: star NT duality}.
\end{proof}



\section{Dominated combs and tree-decompositions}\label{section:domCombTDC}

\noindent In the previous section, we have presented a duality theorem for dominated combs in terms of normal trees. 
And we have deduced from this theorem the hard implications $\neg$(i)$\to$(ii) of Theorem~\ref{thm: comb NT duality} and of Theorem~\ref{thm: star NT duality} (the duality theorems for combs and stars in terms of normal trees).

Therefore we may expect from a duality theorem for dominated combs in terms of tree-decompositions to reestablish the hard implications $\neg$(i)$\to$(ii) of the duality theorems for combs and stars in terms of tree-decompositions (Theorem~\ref{thm: comb tree-decomposition duality} and Theorem~\ref{thm: star tree-decomposition duality} below)---by following arrows in Figure~\ref{fig: desired rel duality thms in terms of tdcs} like we did in Figure~\ref{fig: rel duality thms in terms of NTs}. 
\begin{theorem}[{\cite[Theorem~2]{StarComb1StarsAndCombs}}]\label{thm: comb tree-decomposition duality}
\TFAD
\begin{enumerate}
    \item $G$ contains a comb \at $U$;
    \item $G$ has a rayless tree-decomposition into parts each containing at most finitely many vertices from $U$ and whose parts at non-leaves of the decomposition tree are all finite.
\end{enumerate}
Moreover, the tree-decomposition in \emph{(ii)} can be chosen with connected separators.
\end{theorem}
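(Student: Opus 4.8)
The plan is to deduce this duality theorem from the duality theorem for combs in terms of normal trees, Theorem~\ref{thm: comb NT duality}, by converting the normal tree supplied there into a tree-decomposition; the genuinely new input will be organisational rather than conceptual. As usual one first checks that (i) and (ii) are mutually exclusive, so that the content lies in the implication $\neg$(i)$\to$(ii) and, for the `moreover' part, in reading connected separators off the construction.

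For the implication $\neg$(i)$\to$(ii) we would argue as follows. Assume $G$ contains no comb \at $U$. Then Theorem~\ref{thm: comb NT duality} gives a rayless normal tree $T\subset G$ with $U\subset V(T)$, which we may take nonempty. Two facts drive the construction: no component of $G-T$ meets $U$ (as $U\subset V(T)$); and for each component $C$ of $G-T$ the neighbourhood $N_G(C)$ is a nonempty \emph{finite chain} of $T$ --- a chain since any two of its vertices lie on a common $T$-path through $C$ and hence are comparable, nonempty since $G$ is connected, and finite since an infinite chain of $T$ would span a ray. Put $x_C:=\max N_G(C)$, so $N_G(C)\subset\dc{x_C}$. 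Build the decomposition tree $\hat T$ from $T$ by adding, for each component $C$ of $G-T$, a new pendant leaf $\ell_C$ adjacent to $x_C$; then $\hat T$ is again rayless. Let the parts be $V_t:=\dc t$ for $t\in V(T)$ and $V_{\ell_C}:=V(C)\cup\dc{x_C}$ for each~$C$. That $(\hat T,\cV)$ is a tree-decomposition is routine: for $v\in V(T)$ the nodes whose part contains $v$ are those of $\uc v$ together with the $\ell_C$ with $x_C\ge v$ (each attached to $x_C\in\uc v$), so they induce a subtree, while for $v$ in a component $C$ of $G-T$ the only such node is $\ell_C$; every edge of $G$ inside $T$ lies in the part at its upper endvertex, and every edge meeting a component $C$ lies in $V_{\ell_C}$ because $N_G(C)\subset\dc{x_C}$; and the interaction axiom follows from the tree identity $\dc{t_1}\cap\dc{t_2}=\dc{t_1\wedge t_2}$, the meet lying below every node on the $t_1$--$t_2$ path, after noting that $V_{\ell_C}\cap V_t=\dc{x_C}\cap\dc t$ for $t\in V(T)$ and $V_{\ell_C}\cap V_{\ell_{C'}}=\dc{x_C}\cap\dc{x_{C'}}$ for $C\neq C'$. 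Finally one reads off the four properties: every part at a non-leaf of $\hat T$ is one of the $\dc t$ (the $x_C$ are now non-leaves, but still carry a part of this shape), hence a finite path of $T$; every part meets $U$ finitely, the $\dc t$ being finite and $V_{\ell_C}\cap U\subset\dc{x_C}$ finite; and every adhesion set, and indeed every separator $V_{t_1}\cap V_{t_2}$, is of the form $\dc t$, that is, a path of $T\subset G$, hence connected (and finite). This establishes (ii) with its `moreover' part.

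For the converse, that (ii) rules out (i): suppose $(\hat T,\cV)$ is as in (ii) while, for a contradiction, $G$ contains a comb \at $U$ with spine $R$, infinitely many teeth in $U$, and pairwise disjoint prongs. If $\hat T$ is finite then $V(G)$ is a finite union of parts, each meeting $U$ finitely, so $U$ is finite --- absurd. Hence $\hat T$ is infinite, so it has no two adjacent leaves, and therefore every adhesion set lies inside a part at a non-leaf and is finite. Orient each edge $e$ of $\hat T$ towards the side of $\hat T-e$ that holds a tail of $R$; this is well defined because $R$ is connected and eventually avoids the finite adhesion set of $e$. Since $\hat T$ is rayless, the orientation has a sink $t_0$, and a by-now-standard argument --- using that $\{t:v\in V_t\}$ is a subtree for every $v$, and that $V_{t_0}$ is finite unless $t_0$ is a leaf --- shows that a tail $R'$ of $R$ is contained in the single part $V_{t_0}$. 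As $R'$ is infinite, $V_{t_0}$ is infinite, so $t_0$ is a leaf, its unique neighbour $s_0$ is a non-leaf, and $A:=V_{t_0}\cap V_{s_0}$ is finite and meets every $V_{t_0}$--$(V(G)\setminus V_{t_0})$ path of $G$. Only finitely many teeth lie in $V_{t_0}$, so after discarding finitely many prongs we may assume that each remaining prong joins a tooth outside $V_{t_0}$ to a vertex of $R'\subset V_{t_0}$, hence meets $A$; the prongs being pairwise disjoint, $A$ is infinite, a contradiction.

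We expect the main obstacle to be the bookkeeping in the forward direction --- verifying that $(\hat T,\cV)$ built from the normal tree really is a tree-decomposition with all four properties at once, the interaction axiom demanding the most care once the added leaves $\ell_C$ are in play --- since everything genuinely deep has been delegated to Theorem~\ref{thm: comb NT duality}. A secondary delicate point is the localisation of $R$'s end at the sink $t_0$ in the converse when $t_0$ has infinite degree: there one argues that $R$ can move between the branches at $t_0$ only by crossing the finite set $V_{t_0}$, so that a tail of $R$ is eventually confined to the $t_0$-side of every edge at $t_0$ and thereby trapped in $V_{t_0}$.
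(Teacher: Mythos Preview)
Your proof is correct and follows essentially the same route as the paper. The construction you give for $\neg$(i)$\to$(ii)---attach a leaf $\ell_C$ to the top $x_C$ of the (finite chain) neighbourhood of each component $C$ of $G-T$, and take $V_t=\dc{t}$ and $V_{\ell_C}=V(C)\cup\dc{x_C}$---is exactly the construction the paper uses in its proof of Theorem~\ref{domCombTreeDuality}, from which the present statement is then deduced. The only organisational difference is that you invoke the rayless normal tree of Theorem~\ref{thm: comb NT duality} directly (and infer finiteness of $N_G(C)$ from raylessness), whereas the paper goes through the more general Theorem~\ref{domCombNTduality} and Theorem~\ref{domCombTreeDuality} and then specialises; this buys the paper a unified treatment of combs, stars and dominated combs, while your shortcut is cleaner for this single statement. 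Your converse argument, orienting edges of the rayless decomposition tree towards the side carrying a tail of the spine and finding a sink, is the standard localisation of an end in a tree-decomposition with finite adhesion and matches in spirit the paper's argument in the proof of Theorem~\ref{domCombTreeDuality}.
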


Recall from~\cite{StarComb1StarsAndCombs} that a tree-decomposition $(T,\cV)$ of a given graph $G$ with finite separators \emph{displays} a set $\Psi$ of ends of $G$ if $\tau$ restricts to a bijection $\tau\rest\Psi\colon\Psi\to\Omega(T)$ between $\Psi$ and the end space of $T$  and maps every end that is not contained in $\Psi$ to some node of $T$, where $\tau\colon\Omega(G)\to\Omega(T)\sqcup V(T)$ maps every end of $G$ to the end or node of $T$ which it corresponds to or lives at, respectively.
\begin{theorem}[{\cite[Theorem~7]{StarComb1StarsAndCombs}}]
\label{thm: star tree-decomposition duality}
\TFAD
\begin{enumerate}
\item $G$ contains a star \at $U$;
\item $G$ has a locally finite tree-decomposition with finite and pairwise disjoint separators such that each part contains at most finitely many vertices of $U$.
\end{enumerate}
Moreover, the tree-decomposition in \emph{(ii)} can be chosen with connected separators and so that it displays $\Abs{U}$. 
\end{theorem}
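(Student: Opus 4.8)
The plan is to establish the two implications separately: that (i) and (ii) cannot hold together is short, while the content lies in $\neg$(i)$\to$(ii), which I would reduce to the normal-tree duality (Theorem~\ref{thm: star NT duality}).

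\emph{Exclusivity.}
Suppose $G$ contains a star \at $U$, realised by a centre $c$ and internally disjoint $c$--$\ell_i$ paths $P_i$ with distinct leaves $\ell_1,\ell_2,\dots\in U$, and suppose for contradiction that $(T,\cV)$ is a tree-decomposition as in~(ii). Fix for each $i$ a node $t_i$ whose part contains $\ell_i$; since no part contains infinitely many $\ell_i$ (each part meeting $U$ finitely), we may pass to a subsequence along which the $t_i$ are pairwise distinct. Let $t_c$ be a node whose part contains $c$ and root $T$ at $t_c$. As $T$ is locally finite, K\"onig's Lemma provides a ray $t_c = r_0 r_1 r_2 \dots$ in $T$ such that, for every edge $e_n = r_{n-1} r_n$, infinitely many $t_i$ lie strictly on the side of $e_n$ away from $t_c$. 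For each such $i$ the path $P_i$ runs from the part at $t_c$, on one side of $e_n$, to the part at $t_i$ on the other, so it meets the adhesion set $S_{e_n}$. The sets $S_{e_n}$ are pairwise disjoint, hence $c$ lies in at most one of them; fixing $n$ with $c\notin S_{e_n}$, the infinitely many paths $P_i$ whose $t_i$ lies beyond $e_n$ meet the finite set $S_{e_n}$ in pairwise distinct vertices (they share only $c\notin S_{e_n}$) --- a contradiction.

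\emph{The hard implication.}
Assume $G$ contains no star \at $U$. By Theorem~\ref{thm: star NT duality} there is a locally finite normal tree $T\subseteq G$ that contains $U$ cofinally, all of whose rays are undominated, and such that every component of $G-T$ has finite neighbourhood; by normality each such neighbourhood is even a finite chain of $T$, hence has a top vertex. I would then build the tree-decomposition over a tree closely modelled on $T$: the parts are indexed, for the most part, by the nodes of $T$, where the part at $t$ consists of $t$, its $T$-children, and finitely many of its ancestors --- those needed to cover the $G$-edges running from descendants of $t$ to $\dc{p(t)}$, cf.\ Lemma~\ref{lemma: separation properties normal tree} --- together with one further leaf-part per node of $T$ carrying the union of all components of $G-T$ whose neighbourhood has that node as its top vertex. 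With this shape the adhesion sets are intervals of finite root-paths of $T$, hence automatically finite and, with care, connected; grouping the $G-T$-components by top vertex keeps the decomposition tree locally finite (using local finiteness of $T$); every part at a node of $T$ is finite and each leaf-part meets $U$ only inside the relevant chain, so every part meets $U$ finitely. Finally $\Abs{T}=\Abs{U}$ by Lemma~\ref{lemma: closure and tree containing U cofinally}, and by Lemma~\ref{lemma: nts reflect ends in closure} the ends in $\Abs{U}$ correspond bijectively to the normal rays of $T$, i.e.\ to the ends of the decomposition tree (which the finitary adjustments below leave unchanged); so the decomposition displays $\Abs{U}$.

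\emph{The main obstacle.}
Everything above is routine once the parts are chosen correctly, with one exception: arranging that all adhesion sets are \emph{pairwise disjoint}. A naive choice such as $V_t=\dc{t}$ gives connected adhesion sets $\dc{p(t)}$, but these are merely nested, not disjoint, and whenever a node carries several $T$-children together with a back-edge or a $G-T$-component reaching up from far below, two adhesion sets there will compete for a vertex of the relevant root-path. The remedy is to collapse an entire root-path $\dc{v}$ into a single part at each such ``deep'' feature $v$, and to interpose bundling parts for the children and components hanging off it; after this the adhesion sets near $v$ become pairwise distinct singletons. Carrying out this bookkeeping so that finiteness, connectedness and pairwise disjointness of the adhesion sets, local finiteness of the decomposition, and the displaying of $\Abs{U}$ all survive at once is the technical heart of the argument; the tree-decomposition axioms themselves are then straightforward to verify.
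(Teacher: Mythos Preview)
Your exclusivity argument is correct. Your plan for $\neg$(i)$\to$(ii) is in the same spirit as the paper's, though it takes a shorter route: the paper first passes through the dominated-comb duality (Theorem~\ref{thm: domcomb tdc dual II}), obtaining from the normal tree a rooted tree-decomposition with \emph{upwards} disjoint finite connected separators via the Carmesin-type construction of Theorem~\ref{NormalTreeGivesCarmesinTDC}, and then specialises to stars by showing that the decomposition tree is locally finite and merging finitely many siblings whose adhesion sets intersect. You instead try to go directly from the normal tree of Theorem~\ref{thm: star NT duality} to the final decomposition.

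The gap is in your treatment of the ``main obstacle''. You correctly isolate pairwise disjointness of the adhesion sets as the crux, but the remedy you sketch --- collapsing root-paths $\dc{v}$ at ``deep'' back-edges or $G{-}T$-components and interposing bundling parts --- is not a workable plan as stated: nothing you say prevents such deep features from occurring cofinally along a given normal ray, in which case the collapsed parts again form a nested chain with non-disjoint adhesion sets. What rules this out is precisely the one hypothesis on $T$ that you record but never invoke: every normal ray is undominated in $G$. This is the engine of the paper's construction (Theorem~\ref{NormalTreeGivesCarmesinTDC}): along each normal ray one repeatedly finds vertices $z$ above the current $y$ such that no $\nt$-path joins $\dc{y}$ to $\uc{z}$ --- such $z$ exist exactly because the ray is undominated --- and the interval-separators $V(y\nt z)$ are then upwards disjoint by design; only afterwards does a finite merge of sibling nodes upgrade upwards-disjoint to pairwise disjoint. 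Your write-up needs some version of this mechanism; without it, the ``bookkeeping'' you defer cannot actually be completed, because the idea that makes it possible is missing.
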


In Section~\ref{subsection:domCombTreeForComb}, we will prove a duality theorem for dominated combs in terms of tree-decompositions, making the left but not the right dashed arrow in Figure~\ref{fig: desired rel duality thms in terms of tdcs} true.
In Section~\ref{subsection:domCombTreeForStar}, the situation is reversed: we will prove a duality theorem 
making the right but not the left dashed arrow in Figure~\ref{fig: desired rel duality thms in terms of tdcs} true. 
Here we also provide a short proof of Carmesin's result~\cite{carmesin2014all}, which states that every graph has a tree-decomposition displaying all its undominated ends, for normally spanned graphs. 
Finally, in Section~\ref{subsection: domcomb combined}, we will prove a duality theorem that makes both the left and the right dashed arrow in Figure~\ref{fig: desired rel duality thms in terms of tdcs} true. 
This will be achieved by combining our proof techniques from Section~\ref{subsection:domCombTreeForComb} and Section~\ref{subsection:domCombTreeForStar}.

\begin{figure}[ht]
\begin{tikzcd}[column sep=0cm] 
& {\begin{tabular}{l} 
$\nexists$ dominated comb \\ \phantom{$\nexists$ }\at $U$\end{tabular}}\arrow[d,leftrightarrow]\\ 
{\begin{tabular}{l} 
$\nexists$ comb attached \\ \phantom{$\nexists$} to $U$ \end{tabular}}\arrow[d,leftrightarrow]\arrow[ur] & 
{\begin{tabular}{l} 
    \large{\textbf{?}}
\end{tabular}}\arrow[dl,rightarrow,dashed]\arrow[dr,rightarrow,dashed] & {\begin{tabular}{l} 
$\nexists$ star attached \\ \phantom{$\nexists$} to $U$\end{tabular}}\arrow[ul,rightarrow]\arrow[d,leftrightarrow]\\
{\begin{tabular}{l} 
$\exists$ complementary \\ \phantom{$\exists$} rayless tree-\\ \phantom{$\exists$ }decomposition\end{tabular}
} & & {\begin{tabular}{l} 
$\exists$ complementary \\ \phantom{$\exists$} locally finite tree-\\ \phantom{$\exists$} decomposition \end{tabular}}
\end{tikzcd}
    \caption{The desired relation between stars, combs, dominated combs and complementary tree-decompositions. \\
    The left and right dashed arrow describe an implication whenever there is no comb and no star \at~$U$, respectively.}
    \label{fig: desired rel duality thms in terms of tdcs}
\end{figure}
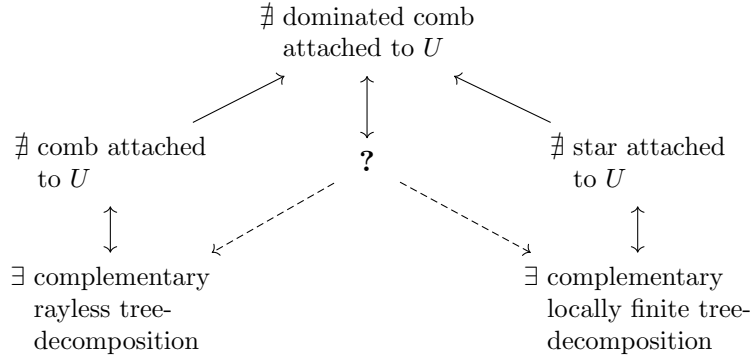


\subsection{A duality theorem related to combs}\label{subsection:domCombTreeForComb}
Here we present a duality theorem for dominated combs in terms of tree-decompositions making the left but not the right dashed arrow of Figure~\ref{fig: desired rel duality thms in terms of tdcs} true:
\begin{theorem}\label{domCombTreeDuality}
\TFAD
\begin{enumerate}
    \item $G$ contains a dominated comb \at $U$;
    \item $G$ has a tree-decomposition $(T,\cV)$ that satisfies:
    \begin{enumerate}[label=\normalfont (\alph*)]
        \item each part contains at most finitely many vertices from $U$;
        \item all parts at non-leaves of $T$ are finite;
        \item every dominated end of $G$ lives in a part at a leaf of $T$.
    \end{enumerate}
\end{enumerate}
Moreover, the tree-decomposition in \emph{(ii)} can be chosen with connected separators and so that it displays $\Abs{U}$.
\end{theorem}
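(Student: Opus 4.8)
The plan is to prove that (i) and (ii) exclude each other and that $\neg$(i) implies (ii); together these give the asserted complementarity. For the \emph{exclusivity}, suppose $G$ contains a dominated comb \at $U$ with spine $R$, and suppose $(T,\cV)$ were a tree-decomposition as in (ii). If $U$ is finite there is no comb \at $U$ at all, so $T$ has at least three nodes and hence, by (a) and (b), all its adhesion sets are finite. A comb is dominated exactly when its spine is, so the end $\omega\ni R$ is dominated, and by (c) it lives at a leaf $\ell$ of $T$. Let $S$ be the finite adhesion set of the unique edge at $\ell$; then a tail $R'$ of $R$ lies in $V_\ell\setminus S$, which is a union of components of $G-S$. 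Since the teeth-paths of the comb are pairwise disjoint, all but finitely many of them attach to $R'$ and avoid the finite set $S$, hence lie entirely in $V_\ell$. Thus $V_\ell$ contains infinitely many teeth, i.e.\ infinitely many vertices of $U$ — contradicting (a).

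For $\neg$(i)$\,\to\,$(ii), assume $G$ has no dominated comb \at $U$. Then Theorem~\ref{domCombNTduality} yields a normal tree $T_0\subset G$ containing $U$ cofinally, all of whose rays are undominated, and such that every component of $G-T_0$ has finite neighbourhood. I would build the tree-decomposition of $G$ modelled on $T_0$: root $T_0$ at a vertex $r$; for each component $C$ of $G-T_0$ the set $N_G(C)$ is, by normality, a finite chain of $T_0$, with a top $t_C$. Let $\hat T$ arise from $T_0$ by adding a new leaf $\ell_C$ joined to $t_C$ for every such $C$, rooted at $r$, and set $V_t:=\dc{t}$ for $t\in T_0$ and $V_{\ell_C}:=V(C)\cup\dc{t_C}$. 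Using normality of $T_0$, that each $\dc{t}$ is a finite path in $T_0$, and that $G-T_0$ has no edges between distinct components, one checks routinely that this is a tree-decomposition whose adhesion sets are precisely the finite connected paths $\dc{s}$ along the edges of $T_0$ and $\dc{t_C}$ along the edges $\ell_Ct_C$; in particular it has connected separators.

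It then remains to verify (a)--(c) and the `moreover'. Since $T_0\supseteq U$, each part $V_{\ell_C}$ meets $U$ only inside the finite set $N_G(C)$, and each $V_t=\dc{t}$ is finite, which gives (a); and (b) holds because the non-leaves of $\hat T$ lie in $T_0$ and carry the finite parts $\dc{t}$. For the end analysis, finiteness of the adhesion sets makes $\tau\colon\Omega(G)\to\Omega(\hat T)\sqcup V(\hat T)$ well defined. By Lemma~\ref{lemma: separation properties normal tree}, an end outside the closure of $T_0$ has a ray eventually confined to a single component $C$ of $G-T_0$, and therefore lives at $\ell_C$; and by Lemma~\ref{lemma: nts reflect ends in closure} every end in the closure of $T_0$ contains a normal ray of $T_0$, which is undominated — so every dominated end of $G$ lies outside the closure of $T_0$ and thus lives at a leaf, which is (c). Finally $\Abs{T_0}=\Abs{U}$ by Lemma~\ref{lemma: closure and tree containing U cofinally}; combining this with Lemma~\ref{lemma: nts reflect ends in closure} (the ends of $\hat T$ are exactly the normal rays of $T_0$, since the added leaves lie on no ray) shows that $\tau$ restricts to a bijection $\Abs{U}\to\Omega(\hat T)$ while sending all other ends to nodes, i.e.\ $(\hat T,\cV)$ displays $\Abs{U}$.

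The main obstacle I anticipate is making the separators simultaneously finite and connected without breaking the tree-decomposition axioms; this is what forces the leaf parts to be taken as $V(C)\cup\dc{t_C}$ rather than the more obvious $V(C)\cup N_G(C)$, whose neighbourhood chain need not be connected in $G$. The only other genuinely combinatorial step is the teeth-trapping argument in the exclusivity direction (confining all but finitely many teeth to the single leaf part $V_\ell$); the remainder is bookkeeping built on the three normal-tree lemmas and on Theorem~\ref{domCombNTduality}.
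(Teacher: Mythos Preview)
Your proof is correct and follows essentially the same route as the paper: both directions are argued the same way, and your construction of $(\hat T,\cV)$ from the normal tree $T_0$ supplied by Theorem~\ref{domCombNTduality}---adding a leaf $\ell_C$ for each component $C$ of $G-T_0$, with parts $\dc{t}$ at $t\in T_0$ and $V(C)\cup\dc{t_C}$ at $\ell_C$---is exactly the paper's construction, verified via the same Lemmas~\ref{lemma: closure and tree containing U cofinally} and~\ref{lemma: nts reflect ends in closure}. One tiny slip: $V_{\ell_C}\cap U$ lies in the finite chain $\dc{t_C}$, not just in $N_G(C)$, but this does not affect the argument.
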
 

Before we provide a proof of this theorem, let us deduce the left dashed arrow of Figure~\ref{fig: desired rel duality thms in terms of tdcs} from it (also see Figure~\ref{fig: domcom tdc dual I} which shows the first two columns of Figure~\ref{fig: desired rel duality thms in terms of tdcs}  in greater detail and with Theorem~\ref{domCombTreeDuality}~(ii) including the theorem's `moreover' part inserted for `?'): If $G$ does not contain a comb \at $U$, then in particular it does not contain a dominated comb \at $U$. Hence Theorem~\ref{domCombTreeDuality} returns a tree-decomposition $(T,\cV)$ of $G$ which we may choose so that it satisfies the theorem's `moreover' part; in particular $(T,\cV)$ displays $\Abs{U}$.
Our assumption that there is no comb \at $U$ implies that $\Abs{U}$ is empty and hence $T$ is rayless.
Using the corresponding conditions from Theorem~\ref{domCombTreeDuality}~(ii) including the theorem's `moreover' part, we conclude that $(T,\cV)$ is as in Theorem~\ref{thm: comb tree-decomposition duality}~(ii) including the theorem's `moreover' part.

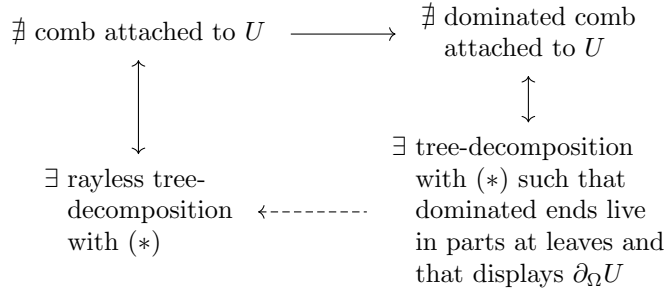
\begin{figure}[ht] 
\begin{tikzcd}[column sep=0cm]
{\begin{tabular}{l}
$\nexists$ comb \at~$U$\end{tabular}}\arrow[rrrr,rightarrow]\arrow[d,leftrightarrow]  & \text{}& \text{}& \text{}&  {\begin{tabular}{l}
$\nexists$ dominated comb \\ \phantom{$\nexists$ }\at~$U$\end{tabular}} \arrow[d,leftrightarrow] \\ 
{\begin{tabular}{l}
$\exists$ rayless tree-\\ \phantom{$\exists$ }decomposition \\
\phantom{$\exists$ }with ($\ast$)\end{tabular}
}\arrow[rrrr,leftarrow,dashed]  & \text{}& \text{}& \text{ }&  {\begin{tabular}{l}
$\exists$ tree-decomposition \\
\phantom{$\exists$ }with ($\ast$) such that \\
\phantom{$\exists$ }dominated ends live\\ 
\phantom{$\exists$} in parts at leaves and\\ 
\phantom{$\exists$} that displays $\Abs{U}$ \end{tabular}
} 
\end{tikzcd}
    \caption{The first two columns of Figure~\ref{fig: desired rel duality thms in terms of tdcs} with Theorem~\ref{domCombTreeDuality}~(ii) including the theorem's `moreover' part inserted for `?'.\\
    Condition ($\ast$) says that parts contain at most finitely many vertices from $U$, that parts at non-leaves are finite and that the separators are connected.}
    \label{fig: domcom tdc dual I}
\end{figure}

Finally, we prove Theorem~\ref{domCombTreeDuality}:
\begin{proof}[Proof of Theorem~\ref{domCombTreeDuality}]
First, we show that at most one of (i) and (ii) holds.
Assume for a contradiction that $G$ contains a dominated comb \at $U$ and has, at the same time, a tree-decomposition $(T,\cV)$ as in (ii).
Let $R$ be the comb's spine.
Since every dominated end of $G$ lives in a part at a leaf of $T$, and since all parts at non-leaves are finite, we find without loss of generality a leaf $\ell$ of $T$ with $R\subset G[V_\ell]$.
But each part contains at most finitely many vertices from $U$.
In particular, $V_\ell$ contains at most finitely many vertices from~$U$.
Therefore, the comb must send some infinitely many pairwise disjoint paths to vertices in $U\setminus V_\ell$.
But the separator of $G$ that is associated with the edge $\ell t\in T$ at $\ell$ is contained in the intersection $V_\ell\cap V_t\subset V_t$ which is finite since $V_t$ is, a contradiction.

Now, to show that at least one of (i) and (ii) holds, we show $\neg$(i)$\to$(ii).
By Theorem~\ref{domCombNTduality} we find a normal tree $\nt\subset G$ containing $U$ cofinally all whose rays are undominated in $G$ and such that every component of $G-\nt$ has finite neighbourhood.
We construct the desired tree-decomposition from $\nt$.

Given a component $C$ of $G-\nt$ the neighbourhood of $C$ is a finite chain in the tree-order of $\nt$, and hence has a maximal element $t_C\in \nt$. We obtain the tree $T$ from $\nt$ by adding each component $C$ of $G-\nt$ as a new vertex and joining it precisely to $t_C$.

Having defined the decomposition tree $T$ it remains to define the parts of the desired tree-decomposition. For nodes $t\in \nt\subseteq T$ we let $V_t$ consist of the down-closure $\dc{t}_{\nt}$ of $t$ in the normal tree $\nt$. And for newly added nodes $C$ we let $V_C$ be the union of $V_{t_C}$ and the vertex set of the component $C$, i.e., we put $V_C:=\dc{t_C}_{\nt}\cup V(C)$.  

Since $\nt$ is normal and contains $U$ cofinally, it follows by standard arguments employing Lemma~\ref{lemma: closure and tree containing U cofinally} and Lemma~\ref{lemma: nts reflect ends in closure} that $(T,\cV)$ displays $\Abs{U}$.
Conditions (a) and (b) hold by construction.
Combining (b) with $(T,\cV)$ displaying $\Abs{U}$ gives (c), which in turn is---as the rest of the `moreover' part---a~direct consequence of how the parts are defined.
\end{proof}

\begin{example}
The tree-decomposition in  Theorem~\ref{domCombTreeDuality}~(ii) cannot be chosen to additionally have pairwise disjoint separators, which shows that the theorem does not make the right dashed arrow in Figure~\ref{fig: desired rel duality thms in terms of tdcs} true. 
To see this suppose that $G$ consists of the first three levels of $T_{\aleph_0}$, the tree all whose vertices have countably infinite degree, and let $U=V(G)$. 
Then $G$ contains no comb \at $U$.
Suppose for a contradiction that $G$ has a tree-decomposition $(T,\cV)$ as in Theorem~\ref{domCombTreeDuality}~(ii) which additionally has pairwise disjoint separators. The graph $G$ being rayless and $U$ being the whole vertex set of $G$ together with our assumption that $(T,\cV)$ has pairwise disjoint separators makes sure that $(T,\cV)$ also displays $\Abs{U}$. 
In particular, by our argumentation in the text below Theorem~\ref{domCombTreeDuality},  $(T,\cV)$ is also a tree-decomposition of $G$ complementary to combs as in Theorem~\ref{thm: comb tree-decomposition duality}.
But then $(T,\cV)$ cannot have pairwise disjoint separators, as pointed out in \cite[Example~3.7]{StarComb1StarsAndCombs}.
\end{example}

\subsection{A duality theorem related to stars}\label{subsection:domCombTreeForStar}
Here we present a duality theorem for dominated combs in terms of tree-decompositions  making the right but not the left dashed arrow in Figure~\ref{fig: desired rel duality thms in terms of tdcs} true.

\begin{theorem}\label{thm: domcomb tdc dual II}
\TFAD
\begin{enumerate}
    \item $G$ contains a dominated comb \at $U$;
    \item $G$ has a rooted tree-decomposition with upwards disjoint finite separators that displays $\Abs{U}$.
\end{enumerate}
Moreover, the tree-decomposition in \emph{(ii)} can be chosen with connected separators and so that it covers $U$ cofinally.
\end{theorem}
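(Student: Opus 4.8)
\emph{Overall strategy.} I would prove the two halves of complementarity separately, with almost all the work going into the implication $\neg$(i)$\to$(ii); the "at most one" direction is a short argument exploiting the upwards‑disjointness of the adhesion sets.

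\emph{At most one of (i) and (ii).} Suppose $G$ contains a dominated comb \at $U$, with spine $R$ lying in an end $\omega$, and at the same time has a rooted tree‑decomposition $(T,\cV)$ as in (ii). Since the comb is \at $U$, its end $\omega$ lies in the closure of $U$, so $\tau$ maps $\omega$ to an end $\eta$ of $T$. Fix a ray $t_0t_1\dots$ in $T$ converging to $\eta$ and let $S_i:=V_{t_i}\cap V_{t_{i+1}}$ be the associated finite adhesion sets; then $R$ has a tail on the $\eta$‑side of every $S_i$. Let $v$ be a vertex dominating $R$, i.e.\ the centre of an infinite $v$–$R$ fan. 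The parts containing $v$ form a connected subtree of $T$, and since the adhesion sets are upwards disjoint, $v$ lies in only finitely many of the $S_i$; hence $v$ is on the root‑side of $S_i$ for all but finitely many $i$. But for such $i$ the finite set $S_i$ must be met by each of the infinitely many pairwise disjoint fan‑paths that end in the $\eta$‑side tail of $R$ — a contradiction.

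\emph{At least one: $\neg$(i)$\to$(ii).} Assuming $G$ has no dominated comb \at $U$, Theorem~\ref{domCombNTduality} gives a normal tree $\nt\subset G$ containing $U$ cofinally, all of whose rays are undominated in $G$, and with every component of $G-\nt$ having finite neighbourhood. Consider $G':=G[V(\nt)]$, which has $\nt$ as a normal spanning tree and is therefore normally spanned. By Lemmas~\ref{lemma: closure and tree containing U cofinally} and~\ref{lemma: nts reflect ends in closure}, every end of $G'$ corresponds to a normal ray of $\nt$, these ends are in natural bijection with the ends of $G$ in the closure of $U$, and all of them are undominated in $G$ and hence in $G'$. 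Now I invoke Carmesin's theorem~\cite{carmesin2014all} in the form proved later in this section — that a normally spanned graph has a tree‑decomposition with finite, connected, upwards‑disjoint adhesion sets displaying its undominated ends — applied to $G'$. Read off through the above bijection, this yields a tree‑decomposition of $G'$ with the required separator properties that displays $\Abs{U}$.

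\emph{From $G'$ back to $G$, and the `moreover' part.} For each component $C$ of $G-\nt$ the neighbourhood $N(C)$ is a finite chain in $\nt$, so it is contained in a suitable part (or a short subtree of parts) of the decomposition of $G'$; I append a new leaf there carrying $V(C)\cup N(C)$, so that every end living inside some such $C$ is sent to a leaf while no end in the closure of $U$ is affected. One then checks that, choosing the attachment nodes with a little care, the adhesion sets stay finite, connected and upwards disjoint; finally root the decomposition tree at a node whose part contains the root of $\nt$ and verify, using that $\nt$ contains $U$ cofinally, that the resulting rooted tree‑decomposition covers $U$ cofinally and still displays $\Abs{U}$.

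\emph{Main obstacle.} The substantive step is the appeal to the normally‑spanned case of Carmesin's theorem with full control over the adhesion sets: building a tree‑decomposition whose adhesion sets are simultaneously finite, connected and upwards disjoint and which displays exactly the undominated ends. The passage back to $G$ and the verification of the `moreover' part are routine in spirit, but the bookkeeping when appending the leaves for the components of $G-\nt$ must be done carefully so that all of "upwards disjoint", "connected", "displays $\Abs{U}$" and "covers $U$ cofinally" survive at once.
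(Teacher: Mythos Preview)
Your ``at most one'' argument is the same as the paper's, just spelled out in more detail.

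For $\neg$(i)$\to$(ii) your route diverges from the paper's, and the detour creates a real gap. The Carmesin-type result proved in this section (Theorem~\ref{NormalTreeGivesCarmesinTDC}) is \emph{not} restricted to normally spanned graphs: it applies to any connected $G$ together with a normal tree $\nt\subset G$ such that every component of $G-\nt$ has finite neighbourhood --- precisely the output of Theorem~\ref{domCombNTduality}. The paper therefore applies Theorem~\ref{NormalTreeGivesCarmesinTDC} directly to $G$ and obtains a tree-decomposition of $G$ outright, with the desired separator properties and displaying $\Abs{U}=\Abs{\nt}$; no passage to $G'=G[V(\nt)]$ and no lifting step are needed. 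For the `moreover' part the paper then simply prunes the decomposition tree to the down-closure of those nodes whose parts meet~$U$.

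Your lifting from $G'$ back to $G$ is not the routine bookkeeping you suggest. To attach a leaf for a component $C$ carrying $V(C)\cup N(C)$ at a node $t$, you need $N(C)\subset V'_t$; but a generic Carmesin decomposition of $G'$ gives no such guarantee --- the finite chain $N(C)$ need not lie inside any single part. Spreading $N(C)$ over a subtree or enlarging intermediate separators risks destroying either upwards-disjointness or connectedness (and note that the $\nt$-paths used to build the decomposition of $G'$ do not see the paths through $C$ that exist in $G$, so the decompositions of $G'$ and of $G$ may genuinely differ). Likewise, your verification that the result covers $U$ cofinally is not automatic: leaves of the Carmesin decomposition of $G'$ may have parts missing $U$, and the fact that $\nt$ contains $U$ cofinally does not by itself repair this --- an explicit pruning step, as in the paper, is needed.
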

Before we prepare the proof of our theorem, let us deduce the right dashed arrow of Figure~\ref{fig: desired rel duality thms in terms of tdcs} from it (also see Figure~\ref{fig: domcom tdc dual II} which shows  the last two columns of Figure~\ref{fig: desired rel duality thms in terms of tdcs}  in greater detail and where Theorem~\ref{thm: domcomb tdc dual II}~(ii) including the theorem's `moreover' part is inserted for `?'):
If $G$ does not contain a star \at $U$, then in particular it does not contain a dominated comb \at $U$. 
Hence Theorem~\ref{thm: domcomb tdc dual II} yields a rooted tree-decomposition $(T,\cV)$ of $G$ which we choose so that it also satisfies the theorem's `moreover' part; in particular $(T,\cV)$ covers $U$ cofinally.
By assumption, the star-comb lemma yields a comb in $G$ \at $U'$ for every infinite subset $U'$ of $U$.
Since $(T,\cV)$ displays $\Abs{U}$ this means that no part can meet $U$ infinitely.

We claim that $T$ must be locally finite.
To see this, suppose 
for a contradiction that $t\in T$ is a vertex of infinite degree. For every up-neighbour $t'$ of $t$ we choose a vertex from $U$ that is contained in a part $V_{t''}$ with $t''\ge t'$ in $T$.
As the part $V_t$ contains only finitely many vertices from $U$, all but finitely many of the chosen vertices are not contained in~$V_t$.
Then applying the star-comb lemma in $G$ to the infinitely many chosen vertices from $U$ yields a comb.
The end of the comb's spine must then live at $t$ because the separators of $(T,\cV)$ are all finite. 
But this contradicts the fact that $(T,\cV)$ displays $\Abs{U}$ which contains the end of the comb's spine.

Finally, it remains to show that the separators of $(T,\cV)$ are pairwise disjoint, but they need not be.
However, we can define an equivalence relation $\sim$ on the nodes of $T$ by declaring $t_1$ and $t_2$ to be equivalent if they have a common predecessor $s$ such that the separators associated with the edges $s t_1$ and $s t_2$ meet.
Then all equivalence classes are finite because $T$ is locally finite, and we may let $(T',\cV')$ be the tree-decomposition where $T'$ is obtained from $T$ by collapsing each equivalence class to a single vertex, and $\cV'=(\,V_C\mid C\in V(T)/{\sim}\,)$ with $V_C:=\bigcup\,\{\,V_t\mid t\in C\,\}$.
Notably, the separators of $(T',\cV')$ are pairwise disjoint and connected while $(T',\cV')$ still displays $\Abs{U}$ and cofinally covers~$U$, completing the argumentation.


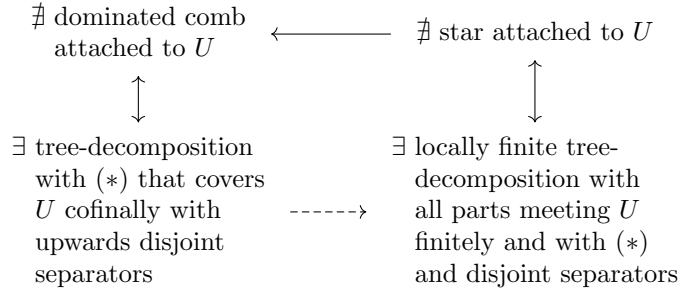
\begin{figure}[ht]
\begin{tikzcd}[column sep=0cm]
{\begin{tabular}{l}
$\nexists$ dominated comb \\ \phantom{$\nexists$ }\at~$U$\end{tabular}}\arrow[rrrr,leftarrow]\arrow[d,leftrightarrow]  & \text{}& \text{}& \text{}&  {\begin{tabular}{l}
$\nexists$ star \at~$U$\end{tabular}} \arrow[d,leftrightarrow] \\ 
{\begin{tabular}{l}
$\exists$ tree-decomposition \\
\phantom{$\exists$ }with ($\ast$) that covers\\
\phantom{$\exists$ }$U$ cofinally with\\
\phantom{$\exists$ }upwards disjoint \\ 
\phantom{$\exists$ }separators
\end{tabular}
} 
\arrow[rrrr,rightarrow,dashed]  & \text{}& \text{}& \text{ }&  {\begin{tabular}{l}
$\exists$ locally finite tree-\\ \phantom{$\exists$} decomposition with\\
\phantom{$\exists$} all parts meeting $U$\\ \phantom{$\exists$} finitely and with ($\ast$)\\ \phantom{$\exists$} and disjoint separators \end{tabular}
}
\end{tikzcd}
    \caption{The last two columns of Figure~\ref{fig: desired rel duality thms in terms of tdcs} with Theorem~\ref{thm: domcomb tdc dual II}~(ii) including the theorem's `moreover' part inserted for `?'. \newline
    Condition ($\ast$) says that the tree-decomposition displays $\Abs{U}$ and has finite connected separators. }
    \label{fig: domcom tdc dual II}
\end{figure}

In order to prove Theorem~\ref{thm: domcomb tdc dual II}, we will employ the following result by Carmesin. 
Recall that a rooted $\Sinf$-tree $(T,\alpha)$ has \emph{upwards disjoint} separators if 
for every two edges $\ve<\vf$ pointing away from the root $r$ of $T$ the separators of $\alpha(\ve)$ and $\alpha(\vf)$ are disjoint.
And $(T,\alpha)$ is \emph{upwards connected} if for every edge $\ve$ pointing away from the root $r$ the induced subgraph $G[B]$ stemming from $(A,B)=\alpha(\ve)$ is connected. 
A rooted tree-decomposition has \emph{upwards disjoint} separators or is \emph{upwards connected} if its corresponding $\Sinf$-tree is.

\begin{theorem}[{Carmesin 2014, \cite[Theorem~2.17]{StarComb1StarsAndCombs}}]\label{thm: tdc displaying undom ends}
Every connected graph $G$ has a rooted tree-decomposition with upwards disjoint finite connected separators that displays the undominated ends of $G$.
\end{theorem}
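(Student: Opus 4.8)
In the generality stated this is Carmesin's theorem~\cite{carmesin2014all}, whose proof builds the decomposition globally and is not short; I would invoke it as a black box. For every use made of it in this paper it is enough to have the statement for graphs carrying a normal spanning tree, and for that case I would argue as follows.

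Fix a normal spanning tree $T$ of $G$, rooted at a vertex $r$. Since $T$ is spanning we have $\Abs{T}=\Omega(G)$, so Lemma~\ref{lemma: nts reflect ends in closure} identifies the ends of $G$ with the normal rays of $T$; write $R_\omega$ for the normal ray of an end $\omega$. Using Lemma~\ref{lemma: separation properties normal tree}~(i) one checks that any vertex dominating $R_\omega$ must lie on $R_\omega$, and hence that $\omega$ is dominated in $G$ if and only if $R_\omega$ is; in particular, if $\omega$ is undominated then for each $t\in R_\omega$ the finite chain $\dc{t}$ can be separated from $\omega$ in $G$ by a finite vertex set.

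The construction would start from the tree-decomposition $(T,(\dc{t})_{t\in T})$ of $G$ over $T$ itself. It is a tree-decomposition because $T$ is normal; its adhesion sets are the chains $\dc{s}$, hence finite and connected; and since $T$ is spanning it already displays \emph{all} ends of $G$. Its only defect is that the adhesion sets along a branch are nested instead of upwards disjoint. I would repair this by contracting, on every branch, each maximal stretch over which some edge of $G$ jumps, keeping in a part only those of its vertices that still send an edge up past the current stretch. On a branch converging to an undominated end the finite separators above bound how far up the edges leaving $\dc{t}$ reach, so the contracted stretches stay finite; after disjointifying the adhesion sets along each branch one is left with a rooted tree-decomposition with upwards disjoint, finite, connected adhesion sets whose decomposition tree still has one end per undominated end of $G$. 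On a branch converging to a dominated end the stretches never close, so that end comes to live inside a single infinite leaf part and is not displayed---which the theorem permits.

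The main obstacle is the repair step: one has to choose, for all undominated normal rays of $T$ simultaneously and compatibly with the branching of $T$, a sequence of pairwise disjoint finite connected separators converging to each such end, and then check that collapsing $T$ accordingly produces a genuine tree-decomposition with upwards disjoint adhesion sets. This is a Menger-type bookkeeping argument resting on the separation property of undominated ends noted above; once it is done, finiteness of the parts at non-leaves, connectedness of the separators, and the bijection between the displayed ends and the ends of the decomposition tree (again via Lemma~\ref{lemma: nts reflect ends in closure}) are routine.
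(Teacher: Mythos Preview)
Your proposal matches the paper's treatment exactly on the main point: the paper does not prove Carmesin's theorem in full generality either, but cites it and then proves only the special case needed (Theorem~\ref{NormalTreeGivesCarmesinTDC}, for graphs with a normal tree $\nt$ such that every component of $G-\nt$ has finite neighbourhood; this includes normal spanning trees).

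For that special case your sketch and the paper's argument rest on the same idea, though the packaging differs. You propose to start from $(T,(\dc{t})_{t\in T})$ and then contract each ``maximal stretch over which some edge of $G$ jumps''. The paper instead builds a new decomposition tree level by level: given a separator $X$ (an ascending path in $\nt$), for each up-neighbour $y$ of $\max X$ it takes as next separators the paths $y\nt z$ where $z\in\uc{y}$ is minimal with the property that no $\nt$-path runs from $\dc{y}$ into $\uc{z}$. This is precisely your ``stretch over which edges jump'', made explicit, and the level-by-level recursion is exactly the ``compatibility with the branching of $T$'' that you flag as the main obstacle. So the core mechanism is the same; the paper's formulation has the advantage that the compatibility across branches, the upwards-disjointness of the separators, and the fact that undominated normal rays survive as rays of the new tree (while dominated ones and ends outside $\Abs{\nt}$ get absorbed into parts) can all be read off directly from the inductive invariants (i)--(iv), without the Menger-type bookkeeping you anticipate. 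Your phrase ``keeping in a part only those of its vertices that still send an edge up past the current stretch'' is the one imprecision: the paper simply takes the whole path $y\nt z$ as the separator, which is automatically connected, rather than a subselection of $\dc{z}$.
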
 

 Carmesin's proof of this theorem in \cite{carmesin2014all} is long and complex. 
However, in this paper we need his theorem only for normally spanned graphs. This is why we will provide a substantially shorter proof for this class of graphs (cf.~Theorem~\ref{NormalTreeGivesCarmesinTDC}).

\begin{lemma}[{\cite[Lemma~2.16]{StarComb1StarsAndCombs}}]\label{lemma: tdc displays ends of G corrisponding to ends of T}
Let $G$ be any graph.
Every upwards connected rooted $\Sinf$-tree $(T,\alpha)$ with upwards disjoint separators displays the ends of $G$ that correspond to the ends of~$T$.
\end{lemma}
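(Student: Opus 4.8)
The plan is to pass from the rooted $\Sinf$-tree $(T,\alpha)$ to the rooted tree-decomposition $(T,\cV)$ that corresponds to it, and to verify that the map $\tau$ from the definition of `displays' restricts to a bijection $\tau\rest\Psi\colon\Psi\to\Omega(T)$, where $\Psi$ is the set of ends of $G$ that correspond to ends of $T$, i.e.\ $\Psi=\{\,\omega\in\Omega(G):\tau(\omega)\in\Omega(T)\,\}$. That every end outside $\Psi$ is sent to a node of $T$ then holds by definition of $\Psi$. Fix an end $\epsilon$ of $T$; let $t_0t_1t_2\dots$ be the ray from the root $r=t_0$ that belongs to $\epsilon$, and for each $i$ let $(A_i,B_i)$ be the separation assigned by $\alpha$ to the edge $t_it_{i+1}$ oriented away from $r$, with separator $S_i:=A_i\cap B_i$. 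Then $B_0\supseteq B_1\supseteq\dots$, each $G[B_i]$ is connected (upwards connectedness), each $S_i$ is finite, and the $S_i$ are pairwise disjoint (upwards disjointness). The one preliminary fact I would establish first is that $\bigcap_iB_i=\emptyset$: a vertex $v$ lying in every $B_i$ has $\{\,u:v\in V_u\,\}$ a subtree of $T$ meeting the subtree of $T$ below each $t_{i+1}$, and a short tree argument then forces $v\in V_{t_j}$ for all large $j$, hence $v\in V_{t_j}\cap V_{t_{j+1}}=S_j$ for all large $j$, contradicting that the $S_j$ are pairwise disjoint.

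For injectivity, suppose $\omega_1\neq\omega_2$ both lie in $\Psi$ and are mapped to the same end $\epsilon$. Choose a finite vertex set $X$ with $C(X,\omega_1)\neq C(X,\omega_2)$. Since the $B_i$ decrease to $\emptyset$ and $X$ is finite, $X\cap B_i=\emptyset$ for all large $i$; fix one such $i$. As $\tau(\omega_1)=\tau(\omega_2)=\epsilon$, each $\omega_k$ has a ray with a tail on the $B_i$-side of $(A_i,B_i)$, hence inside $B_i$. But $G[B_i]$ is connected and avoids $X$, so it lies inside a single component of $G-X$; therefore those two tails lie in the same component of $G-X$, i.e.\ $C(X,\omega_1)=C(X,\omega_2)$ — a contradiction.

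For surjectivity, pick vertices $b_i\in B_i$, pairwise distinct (possible because $\bigcap_iB_i=\emptyset$), and for each $i$ a $b_i$--$b_{i+1}$ path $P_i$ in the connected graph $G[B_i]$; set $W:=\bigcup_iP_i$, a connected subgraph of $G[B_0]$. Apply the star-comb lemma to $W$ and $\{\,b_i:i\in\N\,\}$. The star alternative is impossible: its centre $c$ lies in $B_m\setminus B_{m+1}$ for some $m$ (using $\bigcap_iB_i=\emptyset$), so $c$ sits on the $A_{m+1}$-side of $(A_{m+1},B_{m+1})$ while all but finitely many of its leaves $b_i$ sit strictly on the $B_{m+1}$-side; hence all but finitely many of the star's paths, which pairwise meet only in $c$, would each have to meet the finite separator $S_{m+1}$ — impossible. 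So the lemma yields a comb in $W$ with spine $R$. It remains to show $R$ has a tail in $B_j$ for every $j$: for fixed $j$, all but finitely many teeth of the comb lie in $B_{j+1}$, and $G[B_{j+1}]$ — connected and, by upwards disjointness, disjoint from $S_j$ — lies in one component $C$ of $G-S_j$ with $C\subseteq B_j$; since all but finitely many spokes avoid the finite $S_j$, the spokes onto teeth in $B_{j+1}$ attach to $R$ inside $C$ and cofinally along $R$, so $R$ has a tail in $C\subseteq B_j$. Consequently the end of $R$ lies on the $B_j$-side of $(A_j,B_j)$ for all $j$, which is exactly to say $\tau$ maps it to $\epsilon$. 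Hence $\tau\rest\Psi$ is onto, and we are done.

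I expect the surjectivity step to be the main obstacle: ruling out the star outcome and then showing that the comb's spine really descends along $\epsilon$ both require handling the finite separators $S_i$ with some care, and the innocuous-looking identity $\bigcap_iB_i=\emptyset$ — the one place where upwards disjointness is actually used — is easy to overlook.
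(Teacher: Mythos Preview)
The paper does not prove this lemma; it is quoted verbatim from the companion paper \cite[Lemma~2.16]{StarComb1StarsAndCombs} and used as a black box. So there is no in-paper argument to compare yours against.

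Your proof is correct. A few small remarks:
\begin{itemize}
    \item For the choice of pairwise distinct $b_i\in B_i$: from $\bigcap_i B_i=\emptyset$ together with each $B_i$ being nonempty (which follows from upwards connectedness, reading ``connected'' as excluding the empty graph) one gets that every $B_i$ is in fact infinite, so the choice is unproblematic. Your parenthetical justification is terse but adequate.
    \item In ruling out the star, you implicitly use that $S_j\cap B_{j+1}=\emptyset$, and in the comb step you state this explicitly. It is worth noting that this is not literally the hypothesis ``upwards disjoint'' (which only compares \emph{separators}), but it follows from it via the subtree property of the associated tree-decomposition: any $v\in S_j\cap B_{j+1}$ would lie in $V_{t_{j+1}}\cap V_{t_{j+2}}=S_{j+1}$, contradicting $S_j\cap S_{j+1}=\emptyset$.
    \item In the last step, the jump from ``attachment points of spokes lie cofinally along $R$ inside $C$'' to ``$R$ has a tail in $C$'' deserves one more sentence: since $S_j$ is finite, $R$ has a tail in $G-S_j$, and this tail lies in a single component of $G-S_j$; as the attachment points are cofinal and lie in $C$, that component is $C$.
    \item Your final claim that ``$R$ has a tail in each $B_j$'' already forces $\tau(\text{end of }R)=\epsilon$ is correct: for any edge $e$ of $T$ branching off the rooted ray at $t_h$, one checks $(B_e\setminus S_e)\cap(B_h\setminus S_h)=\emptyset$ from the subtree property, so the induced orientation of $T$ is consistent and points to~$\epsilon$.
\end{itemize}
Your closing reflection is accurate: the fact $\bigcap_i B_i=\emptyset$ is indeed the one place where upwards disjointness along the ray does real work, and surjectivity is where the hypotheses are exercised most fully.
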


\begin{theorem}\label{NormalTreeGivesCarmesinTDC}
Let $G$ be any connected graph. If $\nt\subset G$ is a normal tree such that every component of $G-\nt$ has finite neighbourhood, 
then $G$ has a rooted tree-decomposition $(T,\cV)$ with the following three properties:
\begin{itemize}
    \item the separators are upwards disjoint, finite and connected;
    \item $(T,\cV)$ displays the undominated ends in the closure of $\nt$;
    \item $(T,\cV)$ covers $V(\nt)$ cofinally.
\end{itemize}
\end{theorem}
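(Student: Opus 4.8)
The plan is to reduce the statement, via Lemma~\ref{lemma: tdc displays ends of G corrisponding to ends of T}, to constructing a rooted $\Sinf$-tree $(T,\alpha)$ that is upwards connected, has upwards disjoint finite connected separators, and whose ends correspond precisely to the undominated ends of $G$ in the closure of $\nt$; the tree-decomposition $(T,\cV)$ is then the one induced by $(T,\alpha)$, and only the cofinal-cover clause will still need a (trivial) check. Two observations will keep the end-bookkeeping short. First, since every component of $G-\nt$ has finite neighbourhood, every ray of $G$ either meets $\nt$ infinitely often --- so its end lies in the closure of $\nt$ and hence (Lemma~\ref{lemma: nts reflect ends in closure}) is a normal ray of $\nt$ --- or it is eventually contained in a single component $C$ of $G-\nt$; ends of the second kind will automatically live at a node, since each component will be absorbed into one part. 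Second, a \emph{dominated} end in the closure of $\nt$ automatically lives at a node of \emph{any} $\Sinf$-tree with upwards disjoint separators: a vertex dominating it would lie on the end-side of every separator along the corresponding ray of $T$, hence in the intersection of a pairwise disjoint family of separators, which is empty.

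For each non-root vertex $t$ of $\nt$ let $S_t$ be the set of vertices of $\dc t$ that have a neighbour in $\guc t$. By Lemma~\ref{lemma: separation properties normal tree}\,(ii) the set $\guc t$ induces a component of $G-(\dc t\setminus\{t\})$, so $S_t$ is a finite subset of $\dc t\setminus\{t\}$; replacing $S_t$ by the subpath of $\nt$ that it spans we may take it connected, and the side $B_t\supseteq\guc t$ obtained by deleting it then induces a connected subgraph of $G$. These separations $(A_t,B_t)$ are nested along $\nt$, so for any $W\subseteq V(\nt)$ with the root $r$ in $W$ they induce a rooted $\Sinf$-tree whose underlying tree is obtained from $W$ --- together with one extra leaf for each component $C$ of $G-\nt$, carrying the (convexified) separation with sides $V(G)\setminus V(C)$ and $V(C)\cup N_G(C)$ and attached at the node of $W$ whose $\nt$-chunk contains $\max N_G(C)$ --- by joining each $w\in W$ to the minimal elements of $W$ strictly above it in $\nt$. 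For every such $W$ this $\Sinf$-tree is upwards connected.

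The crux is to choose $W$ so that the retained separators become upwards disjoint while $W$ still tracks every undominated normal ray to infinity. The key observation here is that if $R$ is an undominated normal ray of $\nt$ then each vertex $v$ lies in $S_x$ for only finitely many $x\in R$: otherwise $v$ would send a path into $\guc x$ for $x$ arbitrarily high on $R$, and then the star--comb lemma, applied as in the proof of Theorem~\ref{domCombNTduality}, would make $v$ dominate the end of $R$. Using this, build $W$ by a top-down recursion over $\nt$: put $r\in W$, and whenever $w\in W$ has been placed, extend into each subtree above $w$ by adding the shallowest vertices $x$ whose connected separator $S_x$ is disjoint from the finitely many separators already chosen below $x$ on the path from $r$. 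Along an undominated normal ray such $x$ always exist and their $\nt$-levels tend to infinity, so $W$ meets that ray cofinally; along a dominated ray a dominating vertex eventually forces every candidate separator to meet one already chosen, and along a rayless subtree the recursion simply runs out, so in both cases the corresponding branch of $T$ stays finite. Finally let $\cV$ be the system of parts cut out by the retained separations together with the parts $V(C)\cup N_G(C)$ at the component leaves.

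It remains to verify the three properties. Upwards disjointness holds by the choice of $W$, upwards connectedness was noted above, and $(T,\cV)$ covers $V(\nt)$ cofinally because every part contains its own node (a vertex of $\nt$) together with an initial chunk of the branches above it, so every branch of $T$ meets $V(\nt)$. For the ends: by Lemma~\ref{lemma: tdc displays ends of G corrisponding to ends of T} every end of $T$ corresponds to an end of $G$, which lies in the closure of $\nt$ --- the nested end-sides along a ray of $T$ shrink towards a normal ray of $\nt$ --- and is undominated by the second observation above; conversely each undominated normal ray of $\nt$ is followed cofinally by $W$, hence yields an end of $T$, and distinct such rays yield distinct ends of $T$, while dominated ends in the closure of $\nt$ and ends inside components of $G-\nt$ live at nodes by the two observations. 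Hence $(T,\cV)$ displays exactly the undominated ends in the closure of $\nt$. The main obstacle is the recursive construction of $W$ in the third step: one must thin the separators to pairwise disjointness along every root-to-leaf path of $T$, simultaneously over a tree $\nt$ that may be uncountably wide, without losing cofinality along any undominated normal ray, and the only leverage for this is the local-finiteness observation for separators along undominated rays. The remaining point requiring care is the routine check that the $\nt$-chunks and the absorbed components really do assemble into a tree-decomposition of all of $G$ with the separations fitting together into a consistent $\Sinf$-tree.
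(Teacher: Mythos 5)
Your construction is essentially the paper's: both recursively thin $\nt$ to a set of checkpoints whose separators are ascending paths of $\nt$ cutting off $\guc{\cdot}$, chosen so as to be upwards disjoint, and both hinge on the same dichotomy --- along an undominated normal ray the minima of these separators tend to infinity, so the next disjoint one always exists, while a vertex dominating a normal ray lies in $S_x$ for every sufficiently high $x$ on it, so the recursion halts there. The paper's set $Z_y$ (the minimal $z$ admitting no $\nt$-path from $\dc{y}$ into $\uc{z}$) is your minimality condition in only slightly different clothing, and the end-bookkeeping via Lemma~\ref{lemma: tdc displays ends of G corrisponding to ends of T} is identical.

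One step as written would fail, though. Attaching each component $C$ of $G-\nt$ as a leaf whose separation has separator $N_G(C)$ (convexified or not) destroys upwards disjointness: $N_G(C)$ is a finite chain that may reach arbitrarily far down in $\nt$, so it can meet the separators $P_{w'}$ of edges strictly below the attachment node, and upwards disjointness is required for \emph{all} pairs $\ve<\vf$, including leaf edges. Your own first paragraph already contains the repair --- absorb each $C$ into the part at the node of $W$ it hangs above, which is exactly what the paper does (its sides $B_{yz}=V(y\nt z)\cup\guc{z}$ already swallow the attached components, and parts are allowed to be infinite in this theorem); the ends inside such components then live at nodes as you claim. With that repair, and with the routing argument behind your key observation spelled out (a neighbour in $\guc{x}$ together with connectedness of $G[\guc{x}]$ yields a $v$--$R$ path through $\guc{x}$, and a greedy choice gives disjoint such paths because any fixed vertex lies in $\guc{x}$ for only finitely many $x\in R$), the proof goes through.
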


\begin{proof}
%
Let us write $r$ for the root of $\nt$. Recall that every component of $G-\nt$ has finite neighbourhood by assumption.
Hence every end $\omega\in \Omega\setminus \Abs{\nt}$ lives in a unique component of $G-\nt$; we define the \emph{height} of $\omega$ to be the height of the maximal neighbour of this component in $\nt$.

Starting with $T_0=r$ and $\alpha_0=\emptyset$ we recursively construct an ascending\footnote{Here, we mean ascending in both entries with regard to inclusion, i.e., $T_n\subset T_{n+1}$ and $\alpha_n\subset\alpha_{n+1}$ for all $n\in\N$.} sequence of  $S_{\aleph_0}$-trees $(T_n,\alpha_n)$ all rooted in $r$ and satisfying the following conditions: \begin{enumerate}
    \item the separators of $(T_n,\alpha_n)$ are upwards disjoint and they are vertex sets of ascending paths in $\nt$; 
    \item $T_{n}$ arises from $T_{n-1}$ by adding edges to its $(n-1)$th level;
    \item undominated ends in the closure of $\nt$ live at nodes of the $n$th level of $T_n$ with regard to $(T_n,\alpha_n)$;
    \item if $\omega\in \Omega\setminus \Abs{\nt}$ has height $<n$, then $\omega$ lives at a node of $T_n$ of height $<n$ with regard to $(T_n,\alpha_n)$. 
\end{enumerate}

Before pointing out the details of our construction, let us see how to complete the proof once the $(T_n,\alpha_n)$ are defined. Consider the $S_{\aleph_0}$-tree $(T,\alpha)$ defined by letting $T:=\bigcup_{n\in\N}T_n$ and  $\alpha:=\bigcup_{n\in\N}\alpha_n$, and let $(T,\cV)$ be the corresponding tree-decomposition of $G$.
By (i) we have that $(T,\cV)$ is indeed a rooted tree-decomposition with upwards disjoint finite connected separators all of which meet $V(\nt)$. 
It remains to prove that $(T,\cV)$ displays the undominated ends in the closure of~$\nt$. 

By Lemma~\ref{lemma: tdc displays ends of G corrisponding to ends of T} it suffices to show that the undominated ends in the closure of $\nt$ are precisely the ends of $G$ that correspond  to the ends of $T$.
For the forward inclusion, consider any undominated end $\omega$ in the closure of~$\nt$. 
By (iii), it follows that $\omega$ lives at a node $t_n$ of $T_n$ (with regard to $(T_n,\alpha_n)$) at level $n$ for every $n\in\N$, and these nodes form a ray $R=t_0t_1\ldots$ of $T$. Then $\omega$ corresponds to the end of $T$ containing $R$.

For an indirect proof of the backward inclusion, consider any end $\omega$ of $G$ that is either dominated or not contained in the closure of $\nt$. 
We show that $\omega$ does not correspond to any end of $T$. 
If $\omega$ is dominated, then this follows from the fact that $(T,\cV)$ has upwards disjoint finite separators. 
Otherwise $\omega$ is not contained in the closure of $\nt$. Let $n\in \N$ be strictly larger than the height of~$\omega$.
By (iv), it follows that $\omega$ lives at a node $t_\omega$ of $T_n$ of height $<n$ with regard to $(T_n,\alpha_n)$. 
And by (ii), the tree $T_n$ consists precisely of the first $n$ levels of $T$. We conclude that $\omega$ lives in the part of $(T,\cV)$ corresponding to~$t_\omega$.

\begin{figure}[ht]
\begin{center}
\def\svgwidth{8cm}
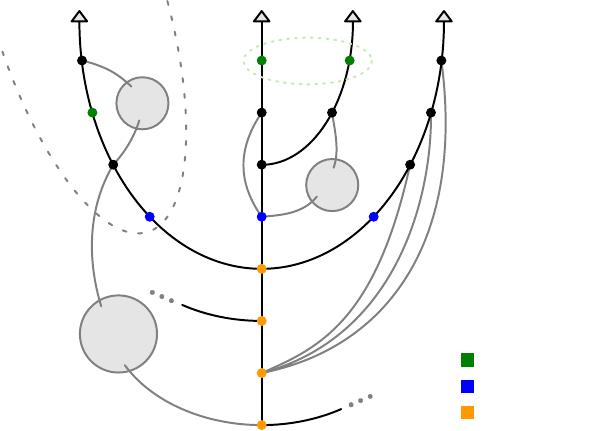
\end{center}
\caption{The construction of the $(T_n,\alpha_n)$ in the proof of Theorem~\ref{NormalTreeGivesCarmesinTDC}. Here the vertex set $Z$ consists of all vertices that are contained in some $Z_y$ with~$y\in Y$. The depicted tree is $\nt$.}
\label{fig:short_carmesin}
\end{figure}

Now, we turn to the construction of the $(T_n,\alpha_n)$, also see Figure~\ref{fig:short_carmesin}. 
At step $n+1$ suppose that 
$(T_n,\alpha_n)$ has already been defined and recall that the separators of $(T_n,\alpha_n)$ are vertex sets of ascending paths in $\nt$ by (i). 
Let $L$ be the $n$th level of $T_n$. 
To obtain $(T_{n+1},\alpha_{n+1})$ from $(T_{n} ,\alpha_n)$, we will add for each $\ell\in L$ new vertices (possibly none) to $T_n$ that we join exactly to $\ell$ and define the image of the so emerging edges under $\alpha_{n+1}$. 
So fix $\ell\in L$. 
Let $X$ be the separator of the separation corresponding to the edge between $\ell$ and its predecessor in $T_n$ (if $n=0$ put $X=\emptyset$). 
Recall that $X$ is the vertex set of an ascending path in $\nt$ by (i).
In $\nt$, let $Y$ be the set of up-neighbours of the maximal vertex in $X$ (for $n=0$ let $Y:=\{r\}$).
For each $y\in Y$ let $Z_y$ be the set of those $z\in \uc{y}_{\nt}$ that are minimal with the property that $G$ contains no $\nt$-path starting in $\dc{y}_{\nt}$ and ending in~$\uc{z}_{\nt}$. 
(Note that a normal ray of $\nt$ that contains $y$ meets $Z_y$ if and only if it is not dominated by any of the vertices in $\dc{y}_{\nt}$; this fact together with (i) will guarantee (iii) for $n+1$.) 
Then the vertex set of $y\nt z$ separates the connected sets $A_{yz}:= (V\setminus \guc{z}_{\nt})\cup V(y\nt z)$ and $B_{yz}:= V(y\nt z)\cup \guc{z}_{\nt }$
whenever $y\in Y$ and $z\in Z_y$. 
Join a node $t_{yz}$ to $\ell$ for every pair $(y,z)$ with $y\in Y$ and $z\in Z_y$, and put $\alpha_{n+1}(\ell t_{yz}):= (A_{yz},B_{yz})$. 
Then the $\Sinf$-tree $(T_{n+1},\alpha_{n+1})$ clearly satisfies (i) and~(ii).  That it satisfies (iii) was already argued in the construction and (iv) follows from (i) and the definition of $\alpha_{n+1}(\ell t_{yz})$. 
\end{proof}

With Theorem~\ref{NormalTreeGivesCarmesinTDC} at hand, we are finally able to prove Theorem~\ref{thm: domcomb tdc dual II}:

\begin{proof}[Proof of Theorem~\ref{thm: domcomb tdc dual II}] First, we show that (i) and (ii) cannot hold at the same time. 
For this, assume for a contradiction that $G$ contains a dominated comb \at $U$ and has a tree-decomposition $(T,\cV)$ with upwards disjoint finite separators that displays $\Abs{U}$.
We write $\omega$ for the end of $G$ containing the comb's spine.
Then $\omega$ lies in the closure of $U$, and since $(T,\cV)$ displays $\Abs{U}$ there is a unique end $\eta$ of $T$ to which $\omega$ corresponds.
But as the finite separators of $(T,\cV)$ are upwards disjoint, it follows that $\omega$ is undominated in~$G$, contradicting that $\omega$ contains the spine of a dominated comb.

Now, to show that at least one of (i) and (ii) holds, we prove $\neg$(i)$\to$(ii).
Using Theorem~\ref{domCombNTduality} we find a normal tree $\nt \subset G$ that contains $U$ cofinally and all whose rays are undominated in $G$.
Furthermore, by the `moreover' part of Theorem~\ref{domCombNTduality} we may assume that every component of $G-\nt $ has finite neighbourhood, and by Lemma~\ref{lemma: closure and tree containing U cofinally} we have $\Abs{U}=\Abs{\nt}$.
Then Theorem~\ref{NormalTreeGivesCarmesinTDC} yields a rooted tree-decomposition $(T',\cV')$ of $G$ as in (ii) that has connected separators and covers $V(\nt)$ cofinally.
It remains to show that $(T',\cV')$ can be chosen so as to cover $U$ cofinally.
For this, consider the nodes of $T'$ whose parts meet $U$, and let $T\subset T'$ be induced by their down-closure in $T'$.
Then let $(T',\alpha')$ be the $\Sinf$-tree of $G$ that corresponds to $(T',\cV')$ and consider the rooted tree-decomposition $(T,\cV)$ of $G$ that corresponds to $(T,\,\alpha'\rest\vE(T)\,)$.
Now $(T,\cV)$ is as in (ii) and satisfies the theorem's `moreover' part.
\end{proof}

\subsection{A duality theorem related to stars and combs}\label{subsection: domcomb combined}
Finally, we present a duality theorem for dominated combs in terms of tree-decompositions that makes both the left and the right dashed arrow in Figure~\ref{fig: desired rel duality thms in terms of tdcs} true. In order to state the theorem, we need one more definition. 
A rooted tree-decomposition $(T,\cV)$ of a graph $G$ has \emph{essentially disjoint separators} if there is an edge set $F\subset E(T)$ meeting every ray of $T$ infinitely often such that the separators of $(T,\cV)$ associated with the edges in $F$ are upwards disjoint.

\newpage
\begin{mainresult}\label{thm: domcomb tdc dual III}
\TFAD
\begin{enumerate}
    \item $G$ contains a dominated comb \at $U$;
    \item $G$ has a rooted tree-decomposition $(T,\cV)$ such that:
    \begin{itemize}
        \item[\textbf{--}]each part contains at most finitely many vertices from $U$;
        \item[\textbf{--}]all parts at non-leaves of $T$ are finite;
        \item[\textbf{--}] $(T,\cV)$ has essentially disjoint connected separators;
        \item[\textbf{--}] $(T,\cV)$ displays the ends in the closure of~$U$.
    \end{itemize}
\end{enumerate}
\end{mainresult}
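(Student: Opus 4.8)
The plan is to establish complementarity directly: first that (i) and (ii) cannot hold simultaneously, then that $\neg$(i) implies (ii). For the first assertion, suppose $G$ contains a dominated comb \at $U$ with spine $R$ in the end $\omega$, while $(T,\cV)$ is a tree-decomposition as in~(ii). Then $\omega$ lies in the closure of $U$ and is dominated in $G$. Since $(T,\cV)$ displays the ends in the closure of $U$, the end $\omega$ corresponds to an end $\eta$ of $T$, and the ray of $T$ towards $\eta$ meets the edge set $F$ witnessing essential disjointness infinitely often; hence it runs through infinitely many pairwise disjoint separators, all finite since every edge of $T$ is incident to a non-leaf. As in the proof of Theorem~\ref{thm: domcomb tdc dual II}, this forces $\omega$ to be undominated --- a contradiction.

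The substance lies in the implication $\neg$(i)$\to$(ii), and the idea is to combine the constructions of Sections~\ref{subsection:domCombTreeForComb} and~\ref{subsection:domCombTreeForStar}. I would begin with Theorem~\ref{domCombNTduality}, which through its `moreover' part supplies a normal tree $\nt\subseteq G$ that contains $U$ cofinally, all of whose rays are undominated in $G$, and such that every component of $G-\nt$ has finite neighbourhood. Observe that then $U\subseteq V(\nt)$, that $\Abs{\nt}=\Abs{U}$ by Lemma~\ref{lemma: closure and tree containing U cofinally}, and that every end in the closure of $U$ is undominated (any dominated such end would carry a dominated comb \at $U$). Running the recursion behind Theorem~\ref{NormalTreeGivesCarmesinTDC} on $\nt$ yields a \emph{Carmesin backbone}: a rooted tree-decomposition of $G$ with upwards disjoint, finite, connected separators of the form $V(y\nt z)$ that displays the ends in the closure of $U$. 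The crucial point --- and the place where the undominatedness of all rays of $\nt$ enters --- is that each part $W$ of this backbone meets $\nt$ in a \emph{rayless} subtree $\nt[W]$: it is a finite ascending path of $\nt$ followed at its top by a branching into the slices between each child $y$ and its escape frontier $Z_y$ (in the notation of that proof), and each slice is rayless because every normal ray of $\nt$ through $y$ already meets $Z_y$. Moreover, since $U\subseteq V(\nt)$ and the backbone is a genuine tree-decomposition, every component of $G-\nt$ contained in $W$ has its neighbourhood inside $V(\nt[W])$.

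I would then refine each backbone part $W$ by applying the construction behind Theorem~\ref{domCombTreeDuality} to the rayless tree $\nt[W]$ together with the components of $G-\nt$ inside $W$: this produces a tree-decomposition of $G[W]$ whose decomposition tree is rayless, whose non-leaf parts are the (automatically finite) down-closures in $\nt[W]$, whose leaf parts consist of such a down-closure together with one component, and which has, for each of the finitely many backbone separators incident with $W$, a node whose part contains it. Gluing these refinements into the backbone along its edges (at the nodes just described) produces the desired tree-decomposition $(T,\cV)$, and I would take $F$ to be the set of backbone edges. Verifying~(ii) is then bookkeeping: the non-leaf parts of $T$ are down-closures in graph-theoretic trees, hence finite; each part meets $U$ only in a finite down-closure of $\nt$, because $U\subseteq V(\nt)$; every adhesion set is an ascending path of $\nt$, hence connected; the edges in $F$ carry upwards disjoint separators and meet every ray of $T$ infinitely often, since every ray of $T$ must leave each rayless refined region and so traverses infinitely many backbone edges, whence the separators are essentially disjoint; and the displaying property is inherited from the backbone, as the rays of $T$ are precisely the backbone rays (the refined regions being rayless) and therefore correspond exactly to the ends in the closure of $U$, while dominated ends and ends outside the closure of $U$ keep living at nodes.

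The step I expect to be the main obstacle is making the two constructions genuinely compatible. One has to check that the comb-style refinement of a backbone part can always be arranged so that every incident backbone separator lies in a single part --- so that the pieces glue into an honest tree-decomposition whose adhesion sets remain ascending paths of $\nt$, hence connected --- and, more delicately, that this refinement introduces no new rays into the decomposition tree, so that neither essential disjointness nor the displaying conclusion is spoiled by a refined region acquiring a ray. This last point is exactly where the hypothesis that $\nt$ has only undominated rays is indispensable: it is what guarantees that the slices between $y$ and $Z_y$, and hence the refined regions, are rayless.
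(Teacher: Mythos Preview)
Your proposal is correct and follows essentially the same approach as the paper: obtain the normal tree $\nt$ from Theorem~\ref{domCombNTduality}, build the Carmesin backbone $(T',\cW)$ via Theorem~\ref{NormalTreeGivesCarmesinTDC}, refine each backbone part using down-closures in $\nt$, and take the backbone edges as the set~$F$ witnessing essential disjointness. The paper sidesteps the gluing issue you flag as your main obstacle by writing the refinement down in one stroke---taking $T$ to be $\nt$ with each component $C$ of $G-\nt$ attached as a leaf at $t_C$, and setting $V_t:=\dc{t}_{\nt}\cap W_{x(t)}$ where $x(t)$ is the least node of $T'$ whose part contains~$t$---so that no explicit gluing is needed and the verification reduces to checking that the edges of $T$ sitting at the tops $t_e$ of the backbone separator-paths $P_e$ inherit upwards disjoint separators from $(T',\cW)$.
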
 

Before we provide a proof of this theorem, let us see that it relates to the duality theorems for stars and combs in terms of tree-decompositions as desired (also see Figure~\ref{fig: domcomb tdc dual III}, which shows Figure~\ref{fig: desired rel duality thms in terms of tdcs} in greater detail and where Theorem~\ref{thm: domcomb tdc dual III}~(ii) including the theorem's `moreover' part is inserted for `?').

\begin{figure}[ht]
\begin{tikzcd}[column sep=0cm] 
& {\begin{tabular}{l}
$\nexists$ dominated comb \\ \phantom{$\nexists$ }\at~$U$\end{tabular}}\arrow[d,leftrightarrow]\\ 
{\begin{tabular}{l}
$\nexists$ comb \at~$U$\end{tabular}}\arrow[d,leftrightarrow]\arrow[ur] & {\begin{tabular}{l}
    $\exists$ tree-decomposition\\ \phantom{$\nexists$ }with $(\ast)$,  essentially \\ \phantom{$\nexists$}   disjoint separators   \\\phantom{$\nexists$}  and parts at non-\\
    \phantom{$\nexists$} leaves finite
\end{tabular}}\arrow[dl,rightarrow,dashed]\arrow[dr,rightarrow,dashed] & {\begin{tabular}{l}
$\nexists$ star \at $U$\end{tabular}}\arrow[ul,rightarrow]\arrow[d,leftrightarrow]\\
{\begin{tabular}{l}
$\exists$ rayless tree-decom-\\ \phantom{$\exists$ }position with ($\ast$)\\
\phantom{$\exists$ }and parts at non-\\
\phantom{$\exists$ }leaves finite
\end{tabular}
} & & {\begin{tabular}{l}
$\exists$ locally finite tree-\\ \phantom{$\exists$} decomposition with\\
\phantom{$\exists$} ($\ast$) and pairwise\\ \phantom{$\exists$} disjoint separators\end{tabular}}
\end{tikzcd}
    \caption{The relation between the duality theorems for combs, stars and the final duality theorem for the dominated combs in terms of tree-decompositions.\newline
    Condition ($\ast$) says that parts contain at most finitely many vertices from $U$, that the separators are finite and connected, and that the tree-decomposition displays $\Abs{U}$.}
    \label{fig: domcomb tdc dual III}
\end{figure}

On the one hand, if $G$ does not contain a comb \at $U$, then in particular it does not contain a dominated comb \at $U$. Hence Theorem~\ref{thm: domcomb tdc dual III} returns a tree-decomposition $(T,\cV)$. By our assumption that there is no comb \at $U$, and since $(T,\cV)$ displays $\Abs{U}$, it follows that the decomposition-tree $T$ is rayless. We conclude that $(T,\cV)$ is as in Theorem~\ref{thm: comb tree-decomposition duality}~(ii) including the theorem's `moreover' part.

On the other hand, if $G$ does not contain a star \at $U$, then in particular it does not contain a dominated comb \at $U$. 
Hence Theorem~\ref{thm: domcomb tdc dual III} returns a rooted tree-decomposition $(T,\cV)$ that, in particular, has essentially disjoint finite connected separators and displays~$\Abs{U}$.
Write $(T,\alpha)$ for the $\Sinf$-tree that corresponds to $(T,\cV)$.
Let $F\subseteq E(T)$ witness that $(T,\cV)$ has essentially disjoint separators.
By possibly thinning out $F$, we may assume that each edge in $F$ meets a rooted ray of $T$.
Consider the tree $\tilde{T}$ that is obtained from $T$ by contracting all the edges of $T$ that are not in $F$ and let $\tilde{\alpha}$ be the restriction of $\alpha$ to $\vF=\vE(\tilde{T})$.
Then $(\tilde{T},\tilde{\alpha})$ corresponds to a tree-decomposition $(\tilde{T},\cW)$ of $G$ with upwards disjoint finite connected separators that displays $\Abs{U}$.
Thus, the tree-decomposition $(\tilde{T},\cW)$ is one of the tree-decompositions of $G$ that are complementary to dominated combs as in Theorem~\ref{thm: domcomb tdc dual II}~(ii) including the theorem's `moreover' part (it covers $U$ cofinally as $F$ meets every rooted ray of $T$ while $(T,\cV)$ displays $\Abs{U}$).
Then, as we have already argued below Theorem~\ref{thm: domcomb tdc dual II}, the tree-decomposition $(\tilde{T},\cW)$ can easily be turned into a tree-decomposition 
as in Theorem~\ref{thm: star tree-decomposition duality}~(ii) including the theorem's `moreover' part.

\begin{proof}[Proof of Theorem~\ref{thm: domcomb tdc dual III}]
Since the tree-decomposition  from (ii) displays $\Abs{U}$ and has essentially disjoint finite separators, it follows by standard arguments that not both (i) and (ii) can hold at the same time. 

In order to show that at least one of (i) and (ii) holds, we prove $\neg$(i)$\to$(ii).
For this, suppose that $G$ contains no dominated comb \at $U$.
Using Theorem~\ref{domCombNTduality} we find a normal tree $\nt\subset G$ that contains $U$ cofinally and such that every component of $G-\nt$ has finite neighbourhood.
Then we let $(T',\cW)$ be a rooted tree-decomposition obtained by and constructed like in the proof of Theorem~\ref{NormalTreeGivesCarmesinTDC}.
Now $(T',\cW)$ is almost as desired, and even has upwards disjoint separators; however, its parts may be infinite.
To solve this, we define another tree-decomposition $(T,\cV)$, as follows.

If $C$ is a component of $G-\nt$, then we write $t_C$ for the maximal node $t\in\nt$ in its neighbourhood.
We let $T$ be the rooted tree obtained from $\nt$ by adding each component $C$ of $G-\nt$ as a new node that we join precisely to~$t_C$.
For every node $t\in\nt\subset T$ we let $x(t)$ be the least node of $T'$ with $t\in W_{x(t)}$ and define $V_t:=\dc{t}_{\nt}\cap W_{x(t)}$.
For every component $C\in T-\nt$ we define $V_C:=V(C)\cup V_{t_C}$.
To see that $(T,\cV)$ is a rooted tree-decomposition of~$G$, note that for every vertex $v$ of $G$ the set $\{\,t\in T\mid v\in V_t\,\}$ induces a subtree of~$T$.

We claim that $(T,\cV)$ is as desired.
The only property in~(ii) that is not evident is that $(T,\cV)$ has essentially disjoint separators.
For every edge $e\in T'$ we write $P_e$ for the ascending path in $\nt$ ending in $t_e$ whose vertex set is the separator associated with $e$ in the tree-decomposition $(T',\cW)$.
To see that the separators of $(T,\cV)$ are essentially disjoint, consider for every edge $e\in T'$ the set $F_e$ of up-edges $f\in\nt$ at $t_e$ with separators of $(T',\cW)$ above them.
Then every edge $f\in F_e\subset E(T)$ is associated with the separator $V(P_e)$ in~$(T,\cV)$ that is also the separator associated with $e$ in~$(T',\cW)$.
So the separators of~$(T,\cV)$ associated with the edges in $F:=\bigcup\,\{\,F_e\mid e\in T'\,\}$ are upwards disjoint because the separators of~$(T',\cW)$ are. 
Since $\nt$ contains $U$ cofinally and $G$ contains no dominated comb \at ~$U$, every normal ray of~$\nt$ is undominated and therefore passes through infinitely many edges of~$F$.
Hence every ray of~$T$ passes through infinitely many edges of~$F$ as well.
\end{proof}

\begin{example}
The tree-decomposition in Theorem~\ref{thm: domcomb tdc dual III}~(ii) cannot be chosen with pairwise disjoint separators instead of essentially disjoint separators: 
Suppose that $G$ consists of the first three levels of $T_{\aleph_0}$ and let $U:=V(G)$. 
Then $G$ contains no comb \at $U$. 
In particular, as we have already argued in the text below Theorem~\ref{thm: domcomb tdc dual III}, every tree-decomposition $(T,\cV)$ of $G$ complementary to dominated combs as in Theorem~\ref{thm: domcomb tdc dual III} is also a tree-decomposition of $G$ complementary to combs as in Theorem~\ref{thm: comb tree-decomposition duality}.
But then $(T,\cV)$ cannot be chosen with pairwise disjoint separators, as pointed out in~\mbox{\cite[Example~3.7]{StarComb1StarsAndCombs}}.
\end{example}

\bibliographystyle{amsplain}
\bibliography{StarCombBib}
\end{document}